\documentclass[twoside,leqno]{article}

\usepackage[letterpaper]{geometry}
\usepackage{siamproceedings}

\usepackage[T1]{fontenc}
\usepackage{amsfonts,amssymb}
\usepackage{graphicx}
\usepackage{epstopdf}
\usepackage{enumitem}
\usepackage{algorithmic}
\ifpdf
  \DeclareGraphicsExtensions{.eps,.pdf,.png,.jpg}
\else
  \DeclareGraphicsExtensions{.eps}
\fi


\newsiamremark{remark}{Remark}
\newsiamremark{hypothesis}{Hypothesis}
\newsiamremark{question}{Question}
\crefname{hypothesis}{Hypothesis}{Hypotheses}
\newsiamthm{claim}{Claim}

\usepackage{amsopn}

\newcommand{\ntr}{\mathop{\mathrm{tr}}}
\newcommand{\tr}{\mathop{\mathrm{Tr}}}
\newcommand{\supp}{\mathop{\mathrm{supp}}}

\begin{document}

\title{\Large Strong Convergence: A Short Survey\thanks{Contribution to 
the Proceedings of the International Congress of Mathematicians, 2026.}}
    \author{Ramon van Handel\thanks{Department of Mathematics,
	Princeton University,
	Princeton, NJ 08544, USA
 	(\email{rvan@math.princeton.edu}).}}

\date{}

\maketitle


\begin{abstract}
A family of random matrices is said to converge strongly to a limiting 
family of operators if the operator norm of every noncommutative 
polynomial of the matrices converges to that of the limiting operators. 
Recent developments surrounding the strong convergence phenomenon have led 
to new progress on important problems in random graphs, geometry, operator 
algebras, and applied mathematics. We review classical and recent results 
in this area, and their applications to various areas of mathematics.
\end{abstract}

\section{Introduction.}

Thoughout this survey, we denote by
$\mathbb{C}^*\langle x_1,\ldots,x_r\rangle$ the $*$-algebra of 
noncommutative polynomials $P$ in the free variables $x_1,\ldots,x_r$ and 
their adjoints; for example,
$$
	P(x,y,z) = 2 xy^*x + (1+i) z - \pi z^3x^*y.
$$
For simplicity, we refer to any such polynomial as a
\emph{$*$-polynomial}. A $*$-polynomial $P(x_1,\ldots,x_r)$ defines a 
bounded operator whenever bounded operators are 
substituted for $x_1,\ldots,x_r$. 

\begin{definition}[Strong convergence]
\label{defn:strong}
Let $\boldsymbol{X}^N=(X^N_1,\ldots,X^N_r)$ be a family of random matrices
for every $N\ge 1$, and let $\boldsymbol{x}=(x_1,\ldots,x_r)$ be a family 
of bounded operators on a Hilbert space. If
$$
	\lim_{N\to\infty}
	\|P(\boldsymbol{X}^N)\|  = \|P(\boldsymbol{x})\|
	\quad\text{in probability}
$$
for every $*$-polynomial $P$, then $\boldsymbol{X}^N$ is said to
\emph{converge strongly} to $\boldsymbol{x}$.
\end{definition}

This innocent looking definition belies the fact that it is an extremely 
strong property of random matrices, since it must hold for \emph{every} 
$*$-polynomial $P$. It was observed by Voiculescu in 1993 \cite{Voi93} 
that the existence of any model (deterministic or random) that strongly 
converges to a free limiting model would resolve a long-standing 
conjecture in the theory of $C^*$-algebras; see Section~\ref{sec:ext} 
below. It was a major breakthrough when Haagerup and Thorbj{\o}rnsen 
proved for the first time, more than a decade later \cite{HT05}, that  
such a random matrix model exists. The title of their 2005 paper, 
``\emph{A new application of random matrices $\ldots$}'' foreshadowed a 
series of unexpected and wide-ranging developments that are the subject of 
this survey.

In recent years, the notion of strong convergence has led to significant 
progress on important problems in several different areas of mathematics, 
including random graphs, hyperbolic surfaces, minimal surfaces, operator 
algebras, and applied mathematics. These new applications of strong 
convergence have gone hand in hand with the development of new methods of 
random matrix theory, which made it possible to establish strong 
convergence in challenging situations that remained well out of reach 
until very recently.

The aim of this survey is to review these and related developments 
surrounding strong convergence. We begin in Section~\ref{sec:strong} by 
providing an overview of random matrix models that have been shown to 
converge strongly, and of the main methods of proof that are used for this 
purpose. These results are concerned with concrete random matrix models 
that converge asymptotically to a limiting set of operators as in 
Definition~\ref{defn:strong}. In Section~\ref{sec:intrinsic}, we discuss a 
surprising nonasymptotic complement to such results: under mild 
conditions, ``almost any'' random matrix behaves like a suitable limiting 
operator for strong convergence, even if it does not arise as in 
Definition~\ref{defn:strong}. This is especially useful in applied 
mathematics, where it is often necessary to consider random matrices that 
have an arbitrary structure. Section~\ref{sec:appl} discusses a 
wide variety of applications of strong convergence to random graphs, 
geometry, operator algebras, and more. Finally, Section~\ref{sec:poly} 
discusses in more detail a new technique, the polynomial method, which has 
been instrumental in several recent developments.

The focus of this short survey is twofold: we aim to convey the breadth of 
the subject, and to highlight some recent developments in this area 
that arise from work of the author and coauthors (especially in 
Sections~\ref{sec:intrinsic}~and~\ref{sec:poly}). A more extensive 
mathematical introduction, and a more detailed treatment of some 
applications and open problems, is given in \cite{vH25cdm}. The survey 
of Magee~\cite{Mag24}, which is focused on the interactions between strong 
convergence, representation theory, and geometry, is highly recommended 
for a complementary perspective.

\section{Strong convergence.}
\label{sec:strong}

\subsection{Limiting models.}

Before we can discuss what is known about strong convergence, we must 
first introduce the limiting models that random matrices converge to. 
Essentially all known strong convergence results may be viewed as arising, 
directly or indirectly, from the following classical construction.

Let $\mathbf{G}$ be a finitely generated group.
For every $g\in\mathbf{G}$, define the 
operator 
$$
	\lambda(g)\delta_w = \delta_{gw}
$$
on $l^2(\mathbf{G})$, where $\delta_w$ denotes
the standard basis vector associated to $w\in\mathbf{G}$ (that is, the 
function in $l^2(\mathbf{G})$ that equals one at $w$ and zero elsewhere).
Then $\lambda:\mathbf{G}\to B(l^2(\mathbf{G}))$ defines the 
\emph{regular representation} of $\mathbf{G}$. Note that, by 
construction, $\lambda(g)^*=\lambda(g^{-1})$ and that $\lambda(g)$ is a 
unitary operator. 

\begin{definition}
Let $\mathbf{F}_r$ be the free group with free generators 
$g_1,\ldots,g_r$, and define 
$$
	u_k = \lambda(g_k).
$$
Then the operators $u_1,\ldots,u_r$ on $l^2(\mathbf{F}_r)$
are called \emph{free Haar unitaries}.
\end{definition}

By construction, free Haar unitaries are algebraically free, i.e., they 
satisfy no algebraic relations. One may therefore expect such operators to 
arise as the limiting model of ``generic'' families of random unitary 
matrices, since such random matrices are increasingly unlikely to satisfy 
any relation of fixed length as their dimension goes to infinity. As we 
will shortly see, this is indeed the case.

To motivate the analogous limit model for self-adjoint random matrices, we 
recall a ubiquitous observation in random matrix theory: the spectral 
properties of many self-adjoint random matrices behave as those of the 
classical gaussian ensembles. This suggests we should aim to define a free 
limiting model that captures the properties of gaussian distributions. 
This idea is made precise by Voiculescu's free probability theory, where 
the free analogue of independent gaussian random variables is provided by 
a \emph{free semicircular family} $s_1,\ldots,s_r$. For a precise 
definition, we refer to the excellent text \cite{NS06}. Such families can 
be constructed in several ways: for example, they can be obtained as $s_k 
= \Phi(u_k+u_k^*)$, where $u_1,\ldots,u_r$ are free Haar unitaries and 
$\Phi$ is a suitably chosen continuous function (see, e.g., the proof of 
\cite[Theorem 2.4]{Haa14}); an often more useful construction arises from 
the creation and annihilation operators on the free Fock space, see 
\cite[pp.\ 102--108]{NS06}.

An important feature of free Haar unitaries and free semicircular 
families is not only that they describe the limiting behavior of many 
random matrix models, but also that free probability theory provides a 
powerful toolbox for explicitly computing the spectra of polynomials of 
such matrices. A simple example
\begin{equation}
\label{eq:kesten}
	\|u_1+u_1^*+\cdots+u_r+u_r^*\| = 2\sqrt{2r-1} 
\end{equation}
for free Haar unitaries $u_1,\ldots,u_r$ is a classical result of 
Kesten~\cite{Kes59}, since the operator $u_1+u_1^*+\cdots+u_r+u_r^*$ may be 
recognized as the adjacency operator of the infinite $2r$-regular tree. 
However, in principle $\|P(u_1,\ldots,u_r)\|$ can be computed for any 
$*$-polynomial $P$ by means of a variational principle due to 
Lehner~\cite{Leh99}. Analogous computations can be done for free semicircular 
families as well (see Section~\ref{sec:intrinsic}).

While free Haar unitaries and free semicircular families are based on the 
free group $\mathbf{F}_r$, models based on non-free discrete groups 
$\mathbf{G}$ are of great interest; see Section~\ref{sec:nonfree} 
below.

\subsection{Strong asymptotic freeness.}
\label{sec:saf}

From a probabilistic perspective, the most natural way to choose a family 
or random matrices it to sample them \emph{independently} from a given 
ensemble. As long as the ensemble is ``sufficiently random'', it is highly 
unlikely that independent random matrices will satisfy any fixed relation, 
and one therefore expects such models to behave freely. This is indeed the 
case.

The first results in this direction were obtained by Haagerup 
and Thorbj{\o}rnsen~\cite{HT05} and by Schultz~\cite{Sch05} for the 
classical gaussian ensembles: that is, the GUE/GOE/GSE models of $N\times 
N$ self-adjoint gaussian random matrices whose law is invariant under 
unitary/orthogonal/symplectic conjugation.

\begin{theorem}[Haagerup--Thorbj{\o}rnsen; Schultz]
\label{thm:ht}
Let $\boldsymbol{X}^N=(X_1^N,\ldots,X_r^N)$ be i.i.d.\ 
GUE/GOE/GSE matrices of dimension $N$, and  
$\boldsymbol{s}=(s_1,\ldots,s_r)$ be a free semicircular family.
Then $\boldsymbol{X}^N$ converges strongly to $\boldsymbol{s}$.
\end{theorem}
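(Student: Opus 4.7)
\emph{Approach.} Since $\|P(\boldsymbol{X}^N)\|^2=\|(P^*P)(\boldsymbol{X}^N)\|$, I may assume $P$ is self-adjoint; set $Y^N=P(\boldsymbol{X}^N)$ and $y=P(\boldsymbol{s})$. Voiculescu's classical asymptotic freeness of independent GUE/GOE/GSE matrices already gives $N^{-1}\tr((Y^N)^{2k})\to\tau(y^{2k})$ in probability for every $k\in\mathbb{N}$, which by the standard moment argument implies $\liminf_{N\to\infty}\|Y^N\|\ge\|y\|$. The real work is the matching upper bound: one must rule out eigenvalues of $Y^N$ escaping above the spectral edge of $y$.

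For the upper bound I would reproduce the Haagerup--Thorbj\o{}rnsen strategy, whose core is a quantitative resolvent estimate---a ``master inequality''---asserting that for every self-adjoint $*$-polynomial $P$ and every $z\in\mathbb{C}$ with $\mathrm{Im}\,z>0$,
\begin{equation*}
\Bigl|\mathbb{E}\bigl[N^{-1}\tr(z-Y^N)^{-1}\bigr]-\tau\bigl((z-y)^{-1}\bigr)\Bigr| \le \frac{C(P)}{N^2\,|\mathrm{Im}\,z|^{k(P)}}.
\end{equation*}
This is proved by first \emph{linearizing}: by Schur complements, the resolvent of $P(\boldsymbol{X}^N)$ appears as a block of $(z-L(\boldsymbol{X}^N))^{-1}$ for an affine matrix pencil $L(\boldsymbol{X}^N)=a_0\otimes I_N+\sum_k a_k\otimes X^N_k$ with self-adjoint coefficients $a_k$, reducing everything to the linear case. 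For the pencil one derives a Schwinger--Dyson (loop) equation by Gaussian integration by parts, whose fixed point matches the free analogue; the $O(1/N^2)$ remainder is estimated via the Poincar\'e/log-Sobolev inequality for the underlying Gaussian law. The GOE and GSE variants are analogous, using the appropriate real or symplectic covariance in the integration-by-parts computation.

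With the master inequality in hand, one chooses $\mathrm{Im}\,z\asymp N^{-\varepsilon}$ for $\varepsilon$ small relative to $k(P)$, and inverts the Stieltjes transform against cutoffs supported outside a neighborhood of $\mathrm{sp}(y)$. Combined with Gaussian concentration of $\|Y^N\|$, this forces $\mathrm{sp}(Y^N)$ to lie in any fixed neighborhood of $\mathrm{sp}(y)$ with probability tending to one, yielding $\limsup_{N\to\infty}\|Y^N\|\le\|y\|$ and completing the proof. The principal obstacle is the linearization/Schwinger--Dyson step: the $1/N^2$ correction must be controlled \emph{uniformly} down to $|\mathrm{Im}\,z|$ of polynomial size in $N^{-1}$, and in the GOE/GSE cases one must carefully track how the real or symplectic structure enters the integration by parts.
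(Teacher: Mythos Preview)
Your outline is correct and matches the approach the survey attributes to Haagerup--Thorbj{\o}rnsen and Schultz: the paper does not give its own proof of this theorem but describes precisely this method in Section~\ref{sec:approaches} (linearization plus Schwinger--Dyson ``master equations'' for the expected resolvent, with the $O(1/N^2)$ correction controlled via Gaussian integration by parts and Poincar\'e inequalities). Your identification of the main difficulty---uniform control of the remainder down to $|\mathrm{Im}\,z|$ polynomially small in $N^{-1}$, and the modified covariance structure in the real/symplectic cases handled by Schultz---is accurate.
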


Theorem \ref{thm:ht} was subsequently extended to much more general random 
matrix models:
\begin{itemize}
\item
Building on the methods developed in 
\cite{HT05,Sch05}, it was shown by Anderson \cite{And13} that the same 
conclusion holds if $X_1^N,\ldots,X_r^N$ are independent Wigner matrices, 
that is, self-adjoint random matrices that have arbitrary (non-gaussian) 
i.i.d.\ entries with 
bounded 
fourth moment on and above the diagonal.
\item
Using new methods discussed 
in Section~\ref{sec:intrinsic}, Bandeira, Boedihardjo, and 
the author \cite[Theorem 2.10]{BBV21} showed that the same conclusion 
holds for 
\emph{any} independent $N\times N$ self-adjoint random matrices 
$X_1^N,\ldots,X_r^N$ with jointly gaussian entries, assuming only that 
$\|\mathbf{E}[X_k^N]\|=o(1)$, $\|\mathbf{E}[(X_k^N)^2]-\mathbf{1}\|=o(1)$, 
and $\|\mathrm{Cov}(X_k^N)\|=o( (\log N)^{-3/2})$ (where $\mathrm{Cov}(X)$ 
denotes the covariance matrix of the entries of $X$). These mild 
assumptions are satisfied even by nonhomogeneous and dependent models, 
such as random band matrices with polylogarithmic band width. An extension 
to many non-gaussian models appears in \cite{BvH24}.
\end{itemize}
Further extensions include strong convergence of 
random matrices interacting through a potential
\cite{GS09}; strong convergence to operator-valued semicircular families
\cite{JLNP25}; joint strong convergence of deterministic and self-adjoint 
random matrices \cite{Mal12,BC17}; and strong quantitative forms of
Theorem \ref{thm:ht} \cite{CGP22,Par23,Par24,CGV25}.

We now turn to strong convergence of random unitary matrices. It was 
observed by Haagerup and Thorbj{\o}rnsen \cite[Lemma 8.1]{HT05} that, 
since one can construct free Haar unitaries $u_1,\ldots,u_r$ as 
$u_k=\Psi(s_k)$ where $s_1,\ldots,s_r$ is a free semicircular family and 
$\Psi$ is a suitably defined continuous function, one can obtain a model 
of random unitary matrices that strongly converges to free Haar unitaries 
by applying $\Psi$ to a family of independent GUE matrices. This suffices 
for certain applications, but yields a random matrix model 
with some unusual properties \cite[Remark 8.3]{HT05}. The following 
result, which was subsequently obtained by Collins and Male \cite{CM14}, 
may be viewed as the natural counterpart of Theorem \ref{thm:ht} for 
random unitary matrices.

\begin{theorem}[Collins--Male]
\label{thm:cm}
Let $\boldsymbol{U}^N=(U_1^N,\ldots,U_r^N)$ be i.i.d.\ 
Haar-distributed random matrices in the groups
$\mathrm{U}(N)/\mathrm{O}(N)/\mathrm{Sp}(N)$, and 
$\boldsymbol{u}=(u_1,\ldots,u_r)$ be free Haar unitaries.
Then $\boldsymbol{U}^N$ converges strongly to $\boldsymbol{u}$.
\end{theorem}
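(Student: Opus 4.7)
The plan is to reduce Theorem~\ref{thm:cm} to Theorem~\ref{thm:ht} via the observation of Haagerup and Thorbj\o rnsen that there is a continuous function $\Psi:\mathbb{R}\to\mathbb{T}$ for which $\Psi(s_1),\ldots,\Psi(s_r)$ are free Haar unitaries whenever $s_1,\ldots,s_r$ is a free semicircular family. Applying $\Psi$ coordinatewise to independent GUE matrices $X_k^N$ produces a proxy random unitary model $\tilde U_k^N=\Psi(X_k^N)$ which, by Theorem~\ref{thm:ht} combined with the continuity of the continuous functional calculus, already converges strongly to $(u_1,\ldots,u_r)$. The remaining task is to transfer strong convergence from the proxy family $\tilde{\boldsymbol{U}}^N$ to the genuine Haar family $\boldsymbol{U}^N$.

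The natural framework for this transfer is a joint strengthening: for any deterministic $\boldsymbol{W}^N$ with $\sup_N\max_i\|W_i^N\|<\infty$ converging in $*$-distribution to a tracial limit $\boldsymbol{w}$, the enlarged tuple $(\boldsymbol{U}^N,\boldsymbol{W}^N)$ should converge strongly to $(\boldsymbol{u},\boldsymbol{w})$ with $\boldsymbol{u}$ and $\boldsymbol{w}$ $*$-free. By conditioning on the GUE sample and choosing $\boldsymbol{W}^N=\tilde{\boldsymbol{U}}^N$, one places the Haar model alongside a known strongly convergent model inside a single joint family, against which $\boldsymbol{U}^N$ can be compared; taking $\boldsymbol{W}^N$ trivial recovers the original theorem. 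Convergence of the joint $*$-distribution in this statement is classical, following from Voiculescu's asymptotic freeness of Haar unitaries together with Collins's Weingarten calculus. What must be established is the matching norm inequality $\limsup_N\|P(\boldsymbol{U}^N,\boldsymbol{W}^N)\|\le\|P(\boldsymbol{u},\boldsymbol{w})\|$ for every $*$-polynomial $P$.

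To access the operator norm, I would invoke the Haagerup--Thorbj\o rnsen linearization trick, which reduces the norm of an arbitrary $*$-polynomial in $(\boldsymbol{U}^N,\boldsymbol{W}^N)$ to that of a linear pencil $L^N=a_0\otimes 1+\sum_k a_k\otimes U_k^N+\sum_k a_k^*\otimes (U_k^N)^*$ with matrix coefficients $a_i$ built from $\boldsymbol{W}^N$. It then remains to show that the resolvent $(\lambda-L^N)^{-1}$ is well approximated by its free counterpart, paralleling the master-equation analysis of \cite{HT05} for GUE. The main obstacle, and the technical heart of the proof, is adapting this resolvent analysis from flat gaussian measure to the curved Haar measure on $\mathrm{U}(N)/\mathrm{O}(N)/\mathrm{Sp}(N)$: Wick's formula is replaced by Weingarten expansions whose error terms must be controlled uniformly in the spectral parameter, and the gaussian integration-by-parts identities that power the HT argument must be replaced by their group-theoretic analogues. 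Once this analysis is in place, combining it with the $*$-distributional convergence and the linearization yields the theorem.
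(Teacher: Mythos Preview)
Your plan has an internal inconsistency that reveals the missing idea. You begin, correctly, by constructing the proxy $\tilde{\boldsymbol U}^N=\Psi(\boldsymbol X^N)$ with the stated aim of deducing the theorem from Theorem~\ref{thm:ht}. But the joint statement you then set out to prove---strong convergence of $(\boldsymbol U^N,\boldsymbol W^N)$ for \emph{arbitrary} deterministic $\boldsymbol W^N$---already contains Theorem~\ref{thm:cm} (take $\boldsymbol W^N$ trivial, as you yourself note), and your proposed attack on it is a direct resolvent/Weingarten analysis on the Haar side. That analysis makes no use of Theorem~\ref{thm:ht} or of the proxy $\tilde{\boldsymbol U}^N$; the latter has silently dropped out of your argument. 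Such a direct approach can in principle be carried out, and later works did so, but it is not a deduction from Haagerup--Thorbj\o rnsen and is considerably more involved than the ``simple construction'' the paper alludes to.

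The Collins--Male construction runs the conditioning in the opposite direction. Male's extension~\cite{Mal12} of Theorem~\ref{thm:ht} shows that independent GUE matrices $\boldsymbol X^N$ together with any bounded deterministic family $\boldsymbol W^N$ converging in $*$-distribution converge strongly, with the limits free from one another. Now take $\boldsymbol W^N=\boldsymbol U^N$ i.i.d.\ Haar, independent of $\boldsymbol X^N$, and condition on $\boldsymbol U^N$: Male's theorem gives that $(\boldsymbol X^N,\boldsymbol U^N)$, and hence $(\tilde{\boldsymbol U}^N,\boldsymbol U^N)$, converges strongly almost surely. The one-line trick you are missing is the absorption property of Haar measure: since each $\tilde U_k^N$ is unitary and independent of $U_k^N$, the products $\tilde U_k^N U_k^N$ are themselves i.i.d.\ Haar on $\mathrm U(N)$, while as $*$-polynomials of a strongly convergent family they converge strongly to $\Psi(s_k)u_k$, a family of free Haar unitaries. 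No new analytic work on the Haar side is needed; the entire burden stays on the gaussian side, where \cite{HT05,Mal12} have already done it.
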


To prove this theorem, Collins and Male introduce a simple construction 
that makes it possible to deduce Theorem \ref{thm:cm} from Theorem 
\ref{thm:ht}. Subsequent works have developed new techniques that can 
analyze Haar-distributed random matrices directly, which have led to
strong quantitative results \cite{Par21,Par23b,BC24,CGV25}. The recent 
work of Austin \cite{Aus25} presents a new perspective on Theorem 
\ref{thm:cm} through an associated large deviations theorem. Theorem 
\ref{thm:cm} has also been extended to certain unitary Brownian motions, 
cf.\ \cite{CDK18,BCC25}.

All the results discussed so far are concerned with models 
that are amenable to analytic methods, such as integration by parts and 
Poincar\'e inequalities. This stands in contrast to the following 
breakthrough result of Bordenave and Collins \cite{BC19}, which
has a more combinatorial flavor.

\begin{theorem}[Bordenave--Collins] 
\label{thm:bc}
Let $\boldsymbol{\Pi}^N=(\Pi_1^N,\ldots,\Pi_r^N)$ be i.i.d.\ uniformly 
distributed $N\times N$ random permutation matrices, and let 
$\boldsymbol{u}=(u_1,\ldots,u_r)$ be free Haar unitaries. Denote by $U_k^N 
= \Pi_k^N|_{1^\perp}$ the restriction of the permutation matrix $\Pi_k^N$ 
to the orthogonal complement of the vector $1$ (the vector with unit 
entries, which is fixed by every permutation matrix). Then 
$\boldsymbol{U}^N=(U_1^N,\ldots,U_r^N)$ converges strongly to 
$\boldsymbol{u}$.
\end{theorem}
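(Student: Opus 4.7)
I would follow the moment method, splitting the proof into a (straightforward) lower bound from weak asymptotic freeness and a (delicate) high-moment upper bound.

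The lower bound is the classical one: for each word $W$ in the generators, $\tr W(\boldsymbol{\Pi}^N)$ equals the number of fixed points of the random permutation word $W(\boldsymbol{\pi}^N)$, and a direct counting argument yields $\frac{1}{N}\tr W(\boldsymbol{\Pi}^N)\to \mathbf{1}_{W=e}$ in probability, where $e$ denotes the identity of $\mathbf{F}_r$. The restriction to $1^\perp$ only subtracts a deterministic $O(1)$ correction, so the same limit holds for $\frac{1}{N-1}\tr W(\boldsymbol{U}^N)$, and this limit is exactly $\tau(W(\boldsymbol{u}))$ where $\tau$ is the canonical trace on the group von Neumann algebra of $\mathbf{F}_r$. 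Combined with the operator identity $\|T\|=\lim_{k\to\infty}\tau((TT^*)^k)^{1/2k}$, weak convergence implies $\liminf_N \|P(\boldsymbol{U}^N)\|\geq \|P(\boldsymbol{u})\|$ in probability. For the upper bound, I would use the trace inequality
\[
\|P(\boldsymbol{U}^N)\|^{2\ell}\leq \tr\bigl((P(\boldsymbol{U}^N)P(\boldsymbol{U}^N)^*)^\ell\bigr)
\]
with $\ell$ of order $\log N$ (so that the dimension factor $N^{1/2\ell}$ is $1+o(1)$), and aim to establish $\mathbb{E}\,\tr\bigl((P(\boldsymbol{U}^N)P(\boldsymbol{U}^N)^*)^\ell\bigr)\leq N\cdot\|P(\boldsymbol{u})\|^{2\ell}(1+o(1))$.

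To analyze this expectation, expand $(PP^*)^\ell$ as a sum of monomials indexed by words in the $\Pi_k^N$ and $(\Pi_k^N)^{-1}$; each term contributes the expected number of fixed points of the corresponding random permutation word, weighted by scalar coefficients from $P$. Geometrically, these fixed-point counts enumerate closed walks in the random Schreier graph determined by $\boldsymbol{\Pi}^N$, which is a uniform random $N$-sheeted covering of the $r$-petaled rose. Such closed walks split into a \emph{tree part} (walks lifting to closed walks on the universal cover, the $2r$-regular tree) and a \emph{tangled part} (walks that close only through nontrivial cycles in the covering). The tree part reproduces exactly $N\,\|P(\boldsymbol{u})\|^{2\ell}$ via Kesten's identification \eqref{eq:kesten} of the free group norm with the spectral radius of the $2r$-regular tree, extended to general $*$-polynomials by Lehner's variational principle. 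The remaining task is to bound the tangled contribution uniformly for $\ell \asymp \log N$.

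This tangle estimate is the main obstacle and the central innovation of \cite{BC19}. It is a noncommutative, matrix-valued counterpart of Friedman's trace method for random regular graphs: one must enumerate decorated closed walks of each topological type and bound, with sharp constants, both their multiplicity and the operator norms of the matrix coefficients extracted from $P$. Bordenave and Collins accomplish this through a careful expansion that isolates the free-probabilistic main term and controls the remainder using precise asymptotic formulas for cycle statistics of random products of permutations. It is this combinatorial step — and not the algebraic framework — that distinguishes the proof from the analytic arguments used for gaussian and Haar-unitary ensembles and that makes the theorem a genuine breakthrough.
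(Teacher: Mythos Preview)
The paper is a survey and does not itself prove Theorem~\ref{thm:bc}; it attributes the result to Bordenave and Collins~\cite{BC19} and, in Section~\ref{sec:approaches}, summarizes their approach only at the level of ``sophisticated forms of the moment method \ldots\ relying in particular on matrix-valued extensions of nonbacktracking methods.'' There is therefore no in-paper proof to compare against, only this one-line description and the pointer to alternative proofs in~\cite{BC24,CGTV25}.

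Your sketch is broadly aligned with the original Bordenave--Collins strategy and correctly identifies the split into a weak-convergence lower bound and a high-moment upper bound with $\ell\asymp\log N$. Two points deserve comment, however. First, you omit the two structural reductions that the paper explicitly singles out as central to approach~2: the \emph{linearization trick}, which replaces an arbitrary $*$-polynomial $P$ by a self-adjoint linear pencil with matrix coefficients (Section~\ref{sec:approaches} states that approaches~1 and~2 ``rely strongly on linearization''), and the passage to \emph{nonbacktracking operators}, which is what actually makes the ``tree part'' tractable and separates it cleanly from the tangled contributions. Without the nonbacktracking reformulation, the naive closed-walk expansion does not decouple into a free main term plus a controllable remainder in the way you suggest; this is precisely why Friedman's original argument was so delicate and why the nonbacktracking viewpoint was a genuine advance. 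Second, your claim that ``the tree part reproduces exactly $N\|P(\boldsymbol{u})\|^{2\ell}$'' is not right as written: the tree contribution yields the $2\ell$-th \emph{moment} $\tau\bigl((P(\boldsymbol{u})P(\boldsymbol{u})^*)^\ell\bigr)$, which is merely \emph{bounded by} $\|P(\boldsymbol{u})\|^{2\ell}$. Lehner's formula computes norms, not moments, and plays no direct role at this step.

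Finally, note that the paper also points to a different route via the polynomial method~\cite{CGTV25} (Section~\ref{sec:poly}), which bypasses the high-moment combinatorics altogether by extracting a strong asymptotic expansion from the low-order terms of the weak one.
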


To give a first hint of the strength of Theorem \ref{thm:bc}, note
that
$$
	A^N = \Pi_1^N + \Pi_1^{N*} + \cdots + \Pi_r^N + \Pi_r^{N*}
$$
may be viewed as the adjacency matrix of a random $2r$-regular graph with 
$N$ vertices. By the Perron-Frobenius theorem, every $2r$-regular graph 
has a trivial largest eigenvalue $2r$ with eigenvector $1$.
Theorem \ref{thm:bc} and \eqref{eq:kesten} imply that the nontrivial 
eigenvalues of a random $2r$-regular graph satisfy
$$
	\max_{i=2,\ldots,N} |\lambda_i(A^N)| =
	\| A^N|_{1^\perp} \|
	\xrightarrow{N\to\infty} 2\sqrt{2r-1}.
$$
This is one of the deepest results in the spectral theory of random 
graphs, due to Friedman \cite{Fri08}. It is recovered here as one very 
special case of strong convergence of random permutation matrices.
But Theorem \ref{thm:bc} is a much stronger result that
paves the way for new applications of strong convergence (cf.\ 
Section~\ref{sec:appl}). New proofs of Theorem \ref{thm:bc} that yield
much stronger quantitative information were obtained in 
\cite{BC24,CGTV25}.

We now describe a different perspective on Theorems \ref{thm:cm} and 
\ref{thm:bc} that has recently led to far-reaching generalizations of 
these results. Let $\mathbf{S}_N$ be the symmetric group on $N$ letters, 
and denote by
$$
	\mathrm{std}_N:\mathbf{S}_N\to \mathrm{M}_{N-1}(\mathbb{C})
$$
the map that associates to each permutation $\sigma\in\mathbf{S}_N$ the 
restriction of the corresponding $N\times N$ permutation matrix to 
$1^\perp$. Then $\mathrm{std}_N$ is an irreducible representation of 
$\mathbf{S}_N$, called the \emph{standard representation}. The random 
matrices that appear in Theorem \ref{thm:bc} are therefore defined by
$U_k^N=\mathrm{std}_N(\sigma_k)$, where $\sigma_1,\ldots,\sigma_r$ are 
i.i.d.\ uniformly distributed random elements of $\mathbf{S}_N$. The 
random matrices in Theorem \ref{thm:cm} may similarly be viewed as
arising from the defining representation of the classical Lie 
groups $\mathrm{U}(N)/\mathrm{O}(N)/\mathrm{Sp}(N)$.

One may now ask what happens if we consider other irreducible 
representations of these groups. In a series of recent papers 
\cite{BC20,CGTV25,MdlS24,CGV25,Cas24}, it has been shown that strong 
convergence remains valid for a remarkably large range of representations. 
For sake of illustration, we state one of the strongest results to date in 
this direction due to Cassidy \cite{Cas24} (see 
\cite[Theorem 5.8]{vH25cdm} for this formulation).

\begin{theorem}[Cassidy]
\label{thm:cassidy}
Let $\sigma_1^N,\ldots,\sigma_r^N$ be i.i.d.\ uniform random 
elements of $\mathbf{S}_N$, and ${\pi_N:\mathbf{S}_N\to 
\mathrm{U}(D_N)}$ be any irreducible unitary representation of 
$\mathbf{S}_N$ of dimension $1<D_N\le \exp(N^{1/21})$. Define
$\boldsymbol{U}^N=(U_1^N,\ldots,U_r^N)$ by $U_k^N=\pi_N(\sigma_k^N)$, and
let $\boldsymbol{u}=(u_1,\ldots,u_r)$ be free Haar unitaries. Then
$\boldsymbol{U}^N$ converges strongly to $\boldsymbol{u}$.
\end{theorem}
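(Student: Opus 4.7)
The plan is to exploit the group-theoretic structure of $\pi_N$: since it is a homomorphism,
\[
\mathrm{tr}(Q(\boldsymbol{U}^N)) = \chi_N\bigl(\tilde Q(\boldsymbol{\sigma}^N)\bigr),
\]
where $\tilde Q$ interprets $Q$ in the group algebra $\mathbb{C}[\mathbf{S}_N]$ (with $U_k^*\leftrightarrow \sigma_k^{-1}$) and $\chi_N$ is the character of $\pi_N$. Strong convergence thus reduces to a statement about characters of products of independent uniform permutations. The matching lower bound $\liminf_{N\to\infty}\|P(\boldsymbol{U}^N)\|\ge \|P(\boldsymbol{u})\|$ follows from convergence in $*$-distribution together with the spectral radius formula in the regular representation: using independence of the $\sigma_k^N$ and character orthogonality, $\mathbf{E}[\chi_N(w(\boldsymbol{\sigma}^N))]$ vanishes exactly whenever $w$ is a nontrivial reduced word in which some generator appears only once, and for the remaining words $\mathbf{E}[\chi_N(w(\boldsymbol{\sigma}^N))]/D_N = o(1)$ by the asymptotic representation theory of $\mathbf{S}_N$.

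For the crucial upper bound I would apply the moment method: estimate
\[
\mathbf{E}\left[\frac{1}{D_N}\,\chi_N\bigl((\tilde P\tilde P^{*})^{p_N}(\boldsymbol{\sigma}^N)\bigr)\right]\le \bigl(\|P(\boldsymbol{u})\|^{2}+o(1)\bigr)^{p_N}
\]
for a sequence $p_N\to\infty$, and conclude by Markov's inequality. The dimension constraint $D_N\le \exp(N^{1/21})$ is exactly what is needed so that $D_N^{1/(2p_N)}\to 1$ when $p_N$ is chosen slightly larger than $N^{1/21}$, at which point this moment bound controls the operator norm exactly up to $\|P(\boldsymbol{u})\|$. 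Expanding the left-hand side yields a combinatorial sum indexed by closed walks of length $2p_N$ in the Cayley graph of $\mathbf{F}_r$, each contributing a normalized character $\chi_N(\tau)/D_N$ evaluated at the product $\tau\in\mathbf{S}_N$ of the letters along the walk.

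The decisive technical input, and the main obstacle, is to control this sum via asymptotic character bounds of Larsen--Shalev / Larsen--Tiep type,
\[
|\chi_\lambda(\tau)|/\dim(\pi_\lambda)\le \dim(\pi_\lambda)^{-\alpha\,|\supp(\tau)|/N},
\]
combined with a quantitative analysis showing that typical random words $w(\boldsymbol{\sigma}^N)$ of length $\sim p_N$ have large support. The balance is delicate: the exponential character decay must simultaneously defeat (i) the combinatorial proliferation of walks ($\sim (4r)^{2p_N}$ of them), (ii) the potentially very large character values on ``resonant'' walks that produce nearly-trivial permutations, and (iii) the factor $D_N$ introduced by the normalization, uniformly as $p_N$ and $D_N$ grow. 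Carrying this out cleanly is where one needs an efficient non-backtracking walk expansion in the spirit of Bordenave--Collins, or the polynomial method of Section~\ref{sec:poly}, which organizes the expansion as an asymptotic series in $1/N$ with explicit remainder control. The exponent $1/21$ presumably records the cost of these competing estimates under currently available character bounds, and should be improvable by sharper inputs on either side.
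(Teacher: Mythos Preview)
The paper is a survey and does not itself prove Theorem~\ref{thm:cassidy}; it is quoted from Cassidy~\cite{Cas24}. What the paper does supply is the general machinery in Section~\ref{sec:poly}, and it explicitly lists~\cite{Cas24} among the applications of the polynomial method. So the relevant benchmark is that method, and your proposal takes a genuinely different route.

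Your plan is a direct moment method: push $p_N$ past $\log D_N\sim N^{1/21}$ and control the resulting sum over words of length $\sim p_N$ via Larsen--Shalev/Larsen--Tiep pointwise character bounds. This has a real gap, which you in fact acknowledge. The bound $|\chi_\lambda(\tau)|/\dim\lambda\le(\dim\lambda)^{-\alpha|\mathrm{supp}\,\tau|/N}$ says essentially nothing when $|\mathrm{supp}\,\tau|=o(N/\log D_N)$, and you give no mechanism for showing that the contribution of short-support (``resonant'') words is negligible against the $\sim(\text{const})^{p_N}$ proliferation of walks. This is precisely the tangles obstruction that already makes the naive moment method fail for the standard representation, and nothing in your sketch overcomes it. Your closing sentence treats ``the polynomial method of Section~\ref{sec:poly}'' as one way to tidy up this estimate, but that is a misreading: the polynomial method is not a refinement of the high-moment computation, it replaces it. One keeps the test polynomial $h$ of \emph{fixed} degree, uses that $N\mapsto\mathbf{E}[\mathrm{ntr}\,h(Z^N)]$ is a rational function of $1/N$ with controlled degree (this is the structural input for $\mathbf{S}_N$, cf.~\cite{CMOS19,CGTV25}, not a pointwise character bound), invokes the Markov inequality of Theorem~\ref{thm:markov} to extend to smooth $h$, and concludes via Lemma~\ref{lem:htlem}. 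Cassidy's contribution is a sharper bookkeeping of the degree of these rational functions across the irreducible constituents of large representations; the exponent $1/21$ records that degree growth, not a trade-off between walk counts and Larsen--Shalev decay as you conjecture.
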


If $\pi_N=\mathrm{std}_N$ and $D_N=N-1$ this result recovers Theorem 
\ref{thm:bc}. However, the spirit of Theorem \ref{thm:cassidy} is that it 
requires much less randomness: it can produce strongly convergent random 
matrices of dimension $D$ using only $(\log D)^{22}$ random bits, while 
Theorem \ref{thm:bc} requires of order $D\log D$ random bits to achieve 
the same conclusion. Analogous results for the unitary group may be found 
in \cite{BC20,MdlS24,CGV25}.

Results such as Theorem \ref{thm:cassidy} make one wonder how much 
randomness is really needed to achieve strong convergence. Could it be 
that Theorem \ref{thm:cassidy} remains valid for \emph{any} choice of 
representations $\pi_N$ with $D_N>1$? Could one hope to achieve strong 
convergence in a situation where the group itself is fixed, such as 
$\mathrm{SU}(2)$, and only the dimension of the representations $\pi_N$ 
grows? Could one hope to achieve strong convergence with no 
randomness at all, using number-theoretic constructions such as those that 
have been used to obtain regular graphs with optimal spectral properties 
\cite{LPS88}? These tantalizing questions remain very much 
open.\footnote{%
These questions are folklore, see, e.g., 
\cite{Voi93,BC20}, and \cite{Buc86,GJS99,RS19} for closely related 
questions.}

\subsection{Beyond freeness.}
\label{sec:nonfree}

All results discussed so far are concerned with families of i.i.d.\ random 
matrices, whose limiting objects are free. Whether strong convergence can 
also hold outside the setting of free groups is however of major interest, 
particularly for applications to geometry where the relevant group is the 
fundamental group of the underlying manifold (see 
Section~\ref{sec:applhyper}). The study of such questions was 
pioneered by Magee, see the survey \cite{Mag24}. Here we briefly describe 
some results in this direction.

Let $\mathbf{G}$ be a finitely generated group with generators
$g_1,\ldots,g_r$. The question is whether there is a sequence 
$$
	\rho_N:\mathbf{G}\to \mathrm{U}(D_N)
$$
of random unitary representations of $\mathbf{G}$
that converge strongly to the regular 
representation $\lambda_\mathbf{G}$, in the sense that
$$
	\lim_{N\to\infty} \|\rho_N(x)\| = \|\lambda_\mathbf{G}(x)\|
	\quad\text{in probability}
$$
for every $x\in\mathbb{C}[\mathbf{G}]$. This question can be rephrased as 
a special instance of Definition~\ref{defn:strong}: we aim to find
random unitary matrices
$\boldsymbol{U}^N=(U_1^N,\ldots,U_r^N)$ that converge
strongly to $\boldsymbol{u}=(u_1,\ldots,u_r)$ defined by 
$u_k=\lambda_{\mathbf{G}}(g_k)$, and such that any relation of
$\boldsymbol{u}$ is also satisfied by $\boldsymbol{U}^N$. 
If it is case that $U_k^N = \Pi_k^N|_{1^\perp}$ for some
random permutation matrices $\Pi_k^N$, then $\rho_N$ are called 
random permutation representations.

Let us illustrate this question in two concrete examples.
When $\mathbf{G}=\mathbf{F}_r$ is a free group, since there are no 
relations, the existence of random unitary or permutation representations 
is simply a reformulation of Theorems~\ref{thm:cm}~and~\ref{thm:bc}, 
respectively. On the other hand, suppose that 
$\mathbf{G}=\boldsymbol{\Gamma}_2$ is 
$$
	\boldsymbol{\Gamma}_2 = 
	\big\langle g_1,g_2,g_3,g_4 : [g_1,g_2][g_3,g_4] = \boldsymbol{1} 
	\big\rangle,
$$
which is the fundamental group of a surface of genus two (here
$[a,b]=aba^{-1}b^{-1}$ and $\boldsymbol{1}$ is the identity).
Then the question is to find random unitary matrices
$\boldsymbol{U}^N=(U_1^N,U_2^N,U_3^N,U_4^N)$ that converge strongly
to $\boldsymbol{u}=(u_1,u_2,u_3,u_4)$ with 
$u_k=\lambda_{\boldsymbol{\Gamma}_2}(g_k)$, with the
additional requirement that $[U_1^N,U_2^N][U_3^N,U_4^N]=\mathbf{1}$  
for every $N$. The latter constraint leads to complicated random matrix 
models.

In first instance, one may attempt to reduce this question to the results 
of the previous section by embedding the non-free group $\mathbf{G}$ in a 
free group $\mathbf{F}_r$. This is not strictly possible, since every 
subgroup of a free group is free. However, there is a class of groups, 
called \emph{limit groups}, that have the following property: for every 
$N$, one can associate to each generator $g_i$ of $\mathbf{G}$ an element 
$h_i$ in $\mathbf{F}_r$ such that $g_1,\ldots,g_r$ and $h_1,\ldots,h_r$ 
have the same relations of length up to $N$. For such groups,
Louder and Magee \cite{LM25} prove the following.

\begin{theorem}[Louder--Magee]
\label{thm:lm}
Any limit group $\mathbf{G}$ admits a sequence of random permutation 
representations that converge strongly to the regular representation
$\lambda_{\mathbf{G}}$.
\end{theorem}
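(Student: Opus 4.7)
The plan is to transport Theorem \ref{thm:bc} (Bordenave--Collins) from $\mathbf{F}_r$ to $\mathbf{G}$ using the limit-group hypothesis. For each $M \ge 1$, use that limit groups are equivalently characterized as fully residually free groups to fix a genuine homomorphism $\psi_M : \mathbf{G} \to \mathbf{F}_r$ whose restriction to the ball $B_\mathbf{G}(M)$ of radius $M$ in the word metric on the given generators is injective; the elements $\psi_M(g_i) = h_i^{(M)}$ are precisely those furnished by the limit-group condition. Let $\boldsymbol{\Pi}^N = (\Pi_1^N,\ldots,\Pi_r^N)$ be i.i.d.\ uniform random permutation matrices as in Theorem \ref{thm:bc}, and define $\rho_{N,M}(g_i) := \boldsymbol{\Pi}^N(\psi_M(g_i))$ by evaluating the reduced word $\psi_M(g_i) \in \mathbf{F}_r$ in the $\Pi_k^N$. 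Since $\psi_M$ is a genuine group homomorphism, $\rho_{N,M}$ respects every relation of $\mathbf{G}$, so it is a bona fide random permutation representation.

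\textbf{Two-step reduction.} Fix a $*$-polynomial $P$ in $g_1,\ldots,g_r$. A diagonal argument, made sequential through the quantitative refinements of Theorem \ref{thm:bc} in \cite{BC24,CGTV25} that accommodate the growing word-length of $\psi_M(P)$, reduces the conclusion $\|\rho_{N,M(N)}(P)|_{1^\perp}\| \to \|\lambda_\mathbf{G}(P)\|$ in probability (for an appropriate $M(N) \to \infty$) to two subclaims:
\begin{itemize}
\item[(a)] for each fixed $M$, $\|\rho_{N,M}(P)|_{1^\perp}\| \to \|\lambda_{\mathbf{F}_r}(\psi_M(P))\|$ in probability as $N \to \infty$;
\item[(b)] $\|\lambda_{\mathbf{F}_r}(\psi_M(P))\| \to \|\lambda_\mathbf{G}(P)\|$ as $M \to \infty$.
\end{itemize}
Claim (a) is immediate from Theorem \ref{thm:bc} applied to the fixed element $\psi_M(P) \in \mathbb{C}[\mathbf{F}_r]$.

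\textbf{Main obstacle.} Item (b) is the crux, an operator-algebraic comparison of reduced C*-norms along the residually-free tower $\{\psi_M\}$. The lower bound $\liminf_M \|\lambda_{\mathbf{F}_r}(\psi_M(P))\| \ge \|\lambda_\mathbf{G}(P)\|$ is accessible through moments: for $M$ larger than the diameter of the support of $(P^*P)^n$, the tracial moments match, $\tau_{\mathbf{F}_r}((\psi_M(P)^*\psi_M(P))^n) = \tau_\mathbf{G}((P^*P)^n)$, and the norm of a positive element in a tracial C*-algebra is recovered as $\lim_n \tau(\cdot^n)^{1/n}$. The reverse bound $\limsup_M \|\lambda_{\mathbf{F}_r}(\psi_M(P))\| \le \|\lambda_\mathbf{G}(P)\|$ is considerably more delicate: it cannot be extracted from moments alone, since small-mass spectral atoms can sit above $\|\lambda_\mathbf{G}(P)\|$, and group homomorphisms $\mathbf{G} \to \mathbf{F}_r$ do not extend to contractions between reduced C*-algebras when $\mathbf{G}$ is non-amenable. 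The required input is the structure theory of limit groups due to Kharlampovich--Myasnikov and Sela, which presents $\mathbf{G}$ as an iterated tower of abelian and surface extensions over free groups and permits a spectral comparison compatible with the approximating maps $\psi_M$; carrying this out is the main technical content of the proof.
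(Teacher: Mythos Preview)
Your strategy matches the paper's sketch exactly: encode $\mathbf{G}$ in $\mathbf{F}_r$ via homomorphisms $\psi_M$ coming from the fully-residually-free characterization, push the i.i.d.\ random permutations through $\psi_M$, and reduce to Theorem~\ref{thm:bc}; the paper even notes that in the resulting model ``each $U_k^N$ is a word (that depends on $N$) of independent random permutation matrices,'' which is precisely your $\rho_{N,M(N)}(g_i)=\boldsymbol{\Pi}^N(\psi_{M(N)}(g_i))$.

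Two remarks. First, the diagonal argument does not actually require the quantitative refinements \cite{BC24,CGTV25}: for each fixed $P$ one can simply let $M(N)\to\infty$ slowly enough that claim~(a) still holds, and then diagonalize over a countable dense set of $*$-polynomials; the original Louder--Magee argument predates those quantitative results. Second, you have correctly located the entire difficulty in the upper bound of~(b), and your diagnosis is accurate: this inequality is \emph{not} accessible by soft arguments (the reduced $C^*$-norm is only lower semicontinuous in the space of marked groups, which gives your easy direction but not the other), and a generic choice of fully-residually-free witnesses $\psi_M$ is not known to suffice. What Louder--Magee actually do is construct the $\psi_M$ through the hierarchical structure theory you cite (iterated extensions of centralizers / Sela towers), in a way that the norm comparison can be verified inductively along the hierarchy. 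Your proposal names this correctly but stops short of sketching the induction, which is indeed where all the work lies; as a proof outline it is faithful to \cite{LM25}, but be aware that the phrase ``carrying this out is the main technical content'' is hiding essentially the whole paper.
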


The proof exploits the encoding of $\mathbf{G}$ in $\mathbf{F}_r$ to 
reduce the problem to an instance of Theorem~\ref{thm:bc}. In this model, 
each $U_k^N$ is a word (that depends on $N$) of independent random 
permutation matrices.

An important example of limit groups are the surface groups 
$\boldsymbol{\Gamma}_g$ of genus $g$, and thus Theorem~\ref{thm:lm} 
provides a strongly convergent random matrix model for surface groups. 
However, the distribution of these random matrices is highly nonuniform.
Motivated by geometric applications (see Section~\ref{sec:applhyper}), one 
may ask whether a \emph{typical} permutation representation of 
$\boldsymbol{\Gamma}_g$ converges strongly. For example, for genus two, 
this question askes whether sampling permutation matrices
uniformly at random from the set
$$
	\big\{
	(\Pi_1^N,\Pi_2^N,\Pi_3^N,\Pi_4^N) :
	\Pi_1^N,\Pi_2^N,\Pi_3^N,\Pi_4^N
	\text{ are }N\times N\text{ permutation matrices such that }
	[\Pi_1^N,\Pi_2^N][\Pi_3^N,\Pi_4]=\mathbf{1}\big\}
$$
and defining $U_k^N=\Pi_k^N|_{1^\perp}$ yields a strongly convergent model
for $\boldsymbol{\Gamma}_2$. That this is indeed the case was proved by
Magee, Puder, and the author \cite{MPV25} using new methods of
random matrix theory (see Section~\ref{sec:poly}).

\begin{theorem}[Magee--Puder--van Handel]
\label{thm:mpv}
For any $g\ge 2$, uniform random permutation representations of
$\boldsymbol{\Gamma}_g$ converge strongly to the regular representation
$\lambda_{\boldsymbol{\Gamma}_g}$.
\end{theorem}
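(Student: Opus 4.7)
The plan is to follow the polynomial method strategy outlined in Section~\ref{sec:poly}. Let $\boldsymbol{\Pi}^N=(\Pi_1^N,\ldots,\Pi_{2g}^N)$ be uniformly distributed on the subvariety
$$
V_N = \big\{(\pi_1,\ldots,\pi_{2g})\in \mathbf{S}_N^{2g} : [\pi_1,\pi_2]\cdots[\pi_{2g-1},\pi_{2g}]=\mathbf{1}\big\},
$$
and set $U_k^N=\Pi_k^N|_{1^\perp}$. By a standard linearization argument (as in \cite{HT05}), it suffices to show that for every word $w\in\mathbf{F}_{2g}$,
$$
\mathbb{E}\big[\mathrm{tr}\, w(\boldsymbol{U}^N)\big] = \tau\big(\lambda_{\boldsymbol{\Gamma}_g}(w)\big) + O(N^{-\alpha})
$$
for some $\alpha>0$, together with polynomial concentration of $\mathrm{tr}\, w(\boldsymbol{U}^N)$ around its mean, and to combine these ingredients for matrix-valued polynomials in $w$. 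Since $\mathrm{tr}\, w(\boldsymbol{U}^N)=(N-1)^{-1}(\mathrm{fix}\, w(\boldsymbol{\Pi}^N)-1)$, the main task is to analyze $\mathbb{E}[\mathrm{fix}\, w(\boldsymbol{\Pi}^N)]$ uniformly in $w$.

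The first step is to produce an asymptotic expansion
$$
\mathbb{E}\big[\mathrm{fix}\, w(\boldsymbol{\Pi}^N)\big] = \sum_{j\ge 0} a_j(w)\, N^{1-j}
$$
whose coefficients are topological counts of branched covers of the closed genus-$g$ surface whose boundary monodromy represents $w$, graded by the Euler characteristic of the cover, and whose leading term $a_0(w)=\tau(\lambda_{\boldsymbol{\Gamma}_g}(w))+1$ recovers the regular representation trace. This builds on the surface-word counting framework of Magee and Puder, and reduces to an enumeration of solutions of the surface relation in $\mathbf{S}_N$ with prescribed $w$-cycle structure, organized topologically so that subleading contributions are genuinely smaller in $N$.

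The second step converts expected-trace asymptotics into an operator norm bound. Here the polynomial method enters: one lifts the analysis to matrix-valued polynomials by tensoring with $M_d(\mathbb{C})$, combines the leading-order formula with a sharp trace inequality of the form
$$
\|P(\boldsymbol{U}^N)\|^{2k} \le \mathrm{Tr}\big((PP^*)^k(\boldsymbol{U}^N)\big)
$$
for a parameter $k=k(N)$ growing like a power of $\log N$, and uses concentration of word traces on $V_N$ (via swap-move martingale estimates or an adapted modified log-Sobolev inequality for the uniform measure on $V_N$) to upgrade the expected bound to a high-probability one. As $k$ grows, $\|P(\boldsymbol{U}^N)\|$ is then forced into any neighborhood of $\|P(\boldsymbol{u})\|$ with probability tending to one.

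The principal obstacle I anticipate is the first step: controlling the expansion \emph{uniformly} as the word length of $w$ grows. The number of relevant branched covers of the genus-$g$ surface proliferates with the length of $w$, and this proliferation must be balanced against the polynomial $N$-decay furnished by their Euler characteristics. Moreover, the surface relation couples the $\Pi_k^N$ and breaks the independence that drives the Bordenave--Collins analysis of Theorem~\ref{thm:bc}, so one cannot simply invoke the free-group machinery. Surmounting this requires new topological and combinatorial input —the core contribution of \cite{MPV25}— whereas the subsequent linearization, concentration, and polynomial-method steps follow the general template described in Section~\ref{sec:poly}.
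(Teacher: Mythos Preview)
Your proposal misidentifies what the polynomial method actually does. In your Step~2 you bound $\|P(\boldsymbol{U}^N)\|$ via $\mathrm{Tr}\big((PP^*)^k(\boldsymbol{U}^N)\big)$ with $k$ growing like a power of $\log N$; this is the classical \emph{moment} method, not the polynomial method of Section~\ref{sec:poly}, and it runs straight into the obstacle you correctly flag. With $k\to\infty$ the relevant words have length growing with $N$, so the subleading terms in your expansion of $\mathbf{E}[\mathrm{fix}\,w(\boldsymbol{\Pi}^N)]$ no longer decay, and the proliferation of branched covers you worry about is fatal rather than merely difficult. The concentration machinery on $V_N$ that you invoke is likewise not part of the argument.

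The actual mechanism in \cite{MPV25} works differently and sidesteps this entirely. One establishes the weak expansion $\mathbf{E}[\ntr h(Z^N)]=\sum_{j\ge0}\nu_j(h)N^{-j}$ only for \emph{fixed} polynomial test functions $h$ of degree $q$, but with an error bound after $q$ terms of Gevrey type $(q!)^C N^{-q}$; a single $O(N^{-\alpha})$ for one fixed $\alpha$ does not suffice. One then views $\tfrac{1}{N}\mapsto\mathbf{E}[\ntr h(Z^N)]$ as (approximately) a polynomial in $\tfrac{1}{N}$ and applies Markov's polynomial inequality (Theorem~\ref{thm:markov}) together with a Chebyshev expansion to show that each $\nu_j$ extends continuously to smooth test functions. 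Strong convergence follows from Lemma~\ref{lem:htlem} by testing against a smooth bump supported off the limiting spectrum: no growing $k$, no concentration on $V_N$, and no uniformity over long words is required. The genuinely new input in \cite{MPV25} is precisely that the Magee--Puder topological expansion for surface groups satisfies the needed Gevrey-type error control, which extends the polynomial method beyond the rational-function setting of \cite{CGTV25}.
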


We now turn to another class of non-free groups which may be viewed as a 
mixture of free and abelian groups. Let $G=([r],E)$ be a finite simple 
graph with $r$ vertices. The right-angled Artin group
$\mathbf{A}_G$ has one generator for each vertex of $G$, where
 a pair of generators commutes if and only if there is an 
edge between them:
$$
	\mathbf{A}_G = \big\langle
	g_1,\ldots,g_r:
	[g_i,g_j]=\mathbf{1}\text{ for every }\{i,j\}\in E\big\rangle.
$$
The following was proved by
Magee and Thomas \cite{MT23}.

\begin{theorem}[Magee--Thomas]
\label{thm:mt}
Every right-angled Artin group $\mathbf{A}_G$ admits a sequence of random 
unitary representations that converge strongly to the regular 
representation $\lambda_{\mathbf{A}_G}$.
\end{theorem}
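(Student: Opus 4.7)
The plan is to construct a random unitary model $\mathbf{U}^N=(U_1^N,\ldots,U_r^N)$ that satisfies the commutation relations $[U_i^N,U_j^N]=\mathbf{1}$ for every edge $\{i,j\}\in E$ at each finite $N$, and then to prove strong convergence to $\mathbf{u}=(u_1,\ldots,u_r)$ with $u_k=\lambda_{\mathbf{A}_G}(g_k)$. The guiding structure is that $\mathbf{A}_G$ is the graph product of $r$ copies of $\mathbb{Z}$ along $G$: the generators of $\lambda_{\mathbf{A}_G}$ are \emph{$G$-independent} in the sense of M\l{}otkowski--Speicher, meaning that they commute along edges of $G$ and are freely independent along non-edges. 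The matrix model must enforce the edge commutations exactly at every $N$, and realize the free independence across non-edges asymptotically.

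For the matrix model, I would place each $U_i^N$ on a tensor-product or Fock-type Hilbert space indexed by $G$, designed so that $U_i^N$ shares tensor factors (and hence automatically commutes) with vertices adjacent to $i$, while acting on disjoint factors relative to non-adjacent vertices. The atomic building blocks are random diagonal unitaries for the abelian (clique) directions and Haar-distributed unitaries on $\mathrm{U}(N)$ or restrictions of random permutations (as in Theorems \ref{thm:cm} and \ref{thm:bc}) for the free directions. Equivalently, one can build the model inductively by peeling off a simplicial vertex of $G$ at each stage and combining the current model with a fresh vertex via a tensor product (when the new vertex is edge-adjacent in the appropriate way) or a free-product-like extension (otherwise). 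Distributional convergence of moments then reduces to Green's normal form for reduced words in $\mathbf{A}_G$: the construction is engineered so that the normalized trace of every nontrivial word factorizes along $G$ and vanishes in the limit, matching $\langle\delta_{\mathbf{1}},\lambda_{\mathbf{A}_G}(w)\delta_{\mathbf{1}}\rangle$.

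The main obstacle is upgrading distributional convergence to operator-norm convergence. The lower bound $\liminf_{N\to\infty}\|P(\mathbf{U}^N)\|\ge\|P(\mathbf{u})\|$ is immediate from the moment computation and lower semicontinuity, so the heart of the matter is the matching upper bound. Following the Haagerup--Thorbj{\o}rnsen paradigm, I would linearize $P$ to a self-adjoint matrix-valued affine polynomial in the $U_i^N$, develop a master equation for $\mathbf{E}(\mathrm{tr}\otimes\mathrm{id})(z-P(\mathbf{U}^N))^{-1}$, and compare it with the limiting Stieltjes transform via a resolvent argument to rule out outlying eigenvalues. The technical challenge is that this master equation must simultaneously accommodate classically independent blocks (at edges of $G$) and asymptotically freely independent blocks (at non-edges), and the two regimes must be interfaced consistently along the graph. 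A more structural alternative, which I would pursue in parallel, is to decompose $\mathbf{A}_G$ as an iterated amalgamated free product over sub-RAAGs corresponding to stars of simplicial vertices and prove a stability-under-amalgamation lemma for strong convergence, bootstrapping from Theorem \ref{thm:cm}; this would reframe the task as showing that strong convergence is preserved under graph products, which should in turn reduce to a free-probabilistic analysis of $G$-independent families.
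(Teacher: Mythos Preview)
Your tensor-product intuition is in the right direction, but the construction is stated backwards: operators that act on \emph{disjoint} tensor factors commute, while those that share a factor typically do not. In the natural model the paper describes, each $U_i^N$ is a Haar unitary acting on a pair of factors of $(\mathbb{C}^N)^{\otimes K}$, arranged so that $U_i^N$ and $U_j^N$ act on disjoint factors precisely when $\{i,j\}\in E$; you have the edge/non-edge roles reversed, and ``shares tensor factors and hence automatically commutes'' is simply false. Your inductive plan of ``peeling off a simplicial vertex'' also fails to cover all graphs---a cycle $C_n$ with $n\ge 4$ has no simplicial vertex, so the induction never starts---and there is no need for separate ``diagonal'' building blocks in clique directions: the same Haar unitaries serve everywhere, with tensor-disjointness alone encoding the commutation relations.

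More substantively, the natural tensor model you are aiming for is exactly what Magee--Thomas were \emph{not} able to handle directly; they record it as a conjecture and instead prove Theorem~\ref{thm:mt} via a more elaborate variant of the construction (the conjecture was settled only later in \cite[\S 9.4]{CGV25}). The obstruction is precisely the one highlighted in Section~\ref{sec:ptconj}: when $\{i,j\}\notin E$, the matrices $U_i^N$ and $U_j^N$ overlap on a tensor factor of dimension $N$, so conditioning on one turns the other into a Haar unitary with matrix coefficients of dimension growing polynomially in $N$. Both your Haagerup--Thorbj{\o}rnsen master-equation route and your amalgamated-free-product stability route therefore run straight into Pisier's question on strong convergence with coefficients of growing dimension, which a standard resolvent comparison does not resolve. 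The actual Magee--Thomas argument works by engineering the model so that the required coefficient dimensions remain within a regime they can control via operator-space techniques; identifying and managing this matrix-coefficient blow-up is the key idea your proposal is missing.
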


A natural candidate random matrix model for $\mathbf{A}_G$ is obtained by 
choosing $U_k^N$ to be independent Haar-distributed random unitary 
matrices of dimension $N^2$ that act on pairs of factors of a tensor 
product $(\mathbb{C}^N)^{\otimes K}$, chosen so that $U_i^N$ and $U_j^N$ 
act on disjoint tensor factors if and only if $\{i,j\}\in E$. This model 
was conjectured to converge strongly in \cite{MT23}. The model used in 
the proof of Theorem \ref{thm:mt} is a more complicated variant of this 
construction; the 
above conjecture was subsequently resolved in \cite[\S 9.4]{CGV25}.

The importance of Theorem \ref{thm:mt} is that many interesting groups 
virtually embed in a right-angled Artin group, so that Theorem 
\ref{thm:mt} provides strongly convergent random unitary representations 
for any such group. This includes, notably, the fundamental group of any 
closed hyperbolic $3$-manifold. It should be emphasized, however, that 
Theorem \ref{thm:mt} provides only random \emph{unitary} representations 
and not random \emph{permutation} representations. In fact, there are 
right-angled Artin groups for which the latter \emph{cannot} exist, see 
\cite[Proposition 2.7]{Mag24}. The situation is even worse for some other 
groups: it was shown by Magee and de la Salle \cite{MdlS23} that the group 
$\mathrm{SL}_4(\mathbb{Z})$ does not even admit a strongly convergent 
sequence of unitary representations. Beyond the results discussed above, 
the question of which groups admit strongly convergent representations 
remains largely open.

\subsection{The main approaches to strong convergence.}
\label{sec:approaches}

We now aim to briefly survey, without details, the methods of random 
matrix theory that have been used to prove strong convergence in different 
models. Roughly speaking, this has been achieved using four distinct 
approaches.
\begin{enumerate}
\item The original approach of Haagerup and Thorbj{\o}rnsen \cite{HT05} 
uses a variant of the Schwinger-Dyson equations of classical random matrix
theory (see, e.g., \cite{Gui19}) to obtain approximate ``master 
equations'' for the expected resolvents of the random matrices in 
question.
\item
The approach developed by Bordenave and Collins \cite{BC19,BC24} 
uses sophisticated forms of the moment method of classical random matrix 
theory, relying in particular on matrix-valued extensions of 
nonbacktracking methods that were previously used for the study of random 
graphs.
\item 
The interpolation method, variants of which were developed independently 
by Collins, Guionnet and Parraud \cite{CGP22} and by Bandeira, Boedihardjo 
and the author \cite{BBV21}, is based on the idea of constructing a 
continuous interpolation $(\boldsymbol{X}^N_t)_{t\in[0,1]}$ between 
the random matrices $\boldsymbol{X}^N_1=\boldsymbol{X}^N$ and the
limiting operators $\boldsymbol{X}^N_0=\boldsymbol{x}$ that appear
in Definition \ref{defn:strong}, and bounding the derivative of
spectral statistics with respect to $t$.
\item 
The polynomial method, which was introduced by Chen, Garza-Vargas, Tropp, 
and the author
\cite{CGTV25} and refined in several further works, is based on the 
observation that the spectral statistics of many random matrix models
of dimension $N$ are regular functions of $\frac{1}{N}$. The method 
provides a way of interpolating between the random matrix and limiting 
models by ``differentiating with respect to $\frac{1}{N}$''.
\end{enumerate}

A major difficulty in establishing strong convergence is that one must 
understand the behavior of arbitrary $*$-polynomials of the underlying 
matrices; these can have a very complicated structure, and their
spectral statistics are not described by tractable equations. An
influential idea that was introduced by Haagerup and 
Thorbj{\o}rnsen (based on earlier work of Pisier \cite{Pis96,Pis18} in 
operator 
space theory) is the \emph{linearization trick}: to prove that
$$
	\lim_{N\to\infty}
	\|P(\boldsymbol{X}^N)\| = \|P(\boldsymbol{x})\|
$$
for every $*$-polynomial $P$, it suffices to prove convergence of the 
spectrum of \emph{linear} self-adjoint $*$-polynomials with \emph{matrix} 
coefficients, that is, expressions of the form
$$
	Q(x_1,\ldots,x_r) =
	A_0\otimes \mathbf{1} + \sum_{k=1}^r
	(A_k\otimes x_k + A_k^*\otimes x_k^*)
$$
where $A_0,\ldots,A_r$ are matrices of any fixed dimension $D$ and
$A_0$ is self-adjoint. This reduction is crucial for obtaining tractable
equations: for example, the matrix Stieltjes transform of 
$Q(\boldsymbol{s})$, where $\boldsymbol{s}$ is a free semicircular family, 
satisfies an explicit quadratic equation called the matrix Dyson equation
\cite[Eq.~(1.5)]{HT05}.

Approaches 1.\ and 2.\ to strong convergence described above rely strongly 
on linearization. However, the interpolation and polynomial methods 3.\ 
and 4.\ can be applied directly to arbitrary $*$-polynomials, since they 
work by interpolating between the spectral statistics of the matrix and 
limiting models rather than by analyzing equations satisfied by their 
spectral statistics. For this reason, the latter two methods also tend 
to be more robust and have been successfully applied to a broader range of 
models.

A different distinction between these methods is that approaches 1.\ and 
3.\ rely strongly on analytic tools, such as integration by parts and 
Poincar\'e inequalites, which are not available for many discrete models. 
In contrast, methods 2.\ and 4.\ are ultimately based only on moment 
computations which are accessible for a broad class of random matrix 
models. The latter approaches have therefore proved to be essential for 
the study of questions such as strong convergence of random permutations. 
However, the study of certain highly irregular models, such as joint 
strong convergence of random and deterministic matrices \cite{Mal12,CGP22} 
or the intrinsic freeness phenomenon discussed in 
Section~\ref{sec:intrinsic} below, has so far been accomplished only in 
analytic settings.

Among the methods described above, the recently introduced polynomial 
method has proved to be especially powerful both in the range of models 
whose analysis it enables and in the strength of the quantitative results 
that can be obtained from it. We will discuss this method further
in Section~\ref{sec:poly}.

\section{Intrinsic freeness.}
\label{sec:intrinsic}

The aim of this section is to describe a surprising cousin of the strong 
convergence phenomenon, the \emph{intrinsic freeness principle}, developed 
by Bandeira, Boedihardjo, and the author \cite{BBV21}. While strong 
convergence in the sense of Definition \ref{defn:strong} states that the 
spectrum of a sequence of random matrices behaves asymptotically as that 
of a limiting operator, the upshot of this section is that---in a certain 
sense---the spectrum of ``almost any'' gaussian random matrix behaves, 
nonasymptotically, like that of an associated deterministic operator.
This opens the door to studying essentially arbitrarily structured random 
matrices of the kind that appear, for example, in many problems of applied 
mathematics.

To motivate this development, let us begin by revisiting the classical 
strong convergence theorem of Haagerup and Thorbj{\o}rnsen. Let
$G_1^N,\ldots,G_r^N$ be independent GUE matrices of dimension $N$, and let
$s_1,\ldots,s_r$ be a free semicircular family. Define the 
$DN$-dimensional random matrix
$$
	X^N = A_0\otimes\mathbf{1} +
	\sum_{i=1}^r A_i\otimes G_i^N
$$
and the associated limiting operator
$$
	X_{\rm free} =
	A_0\otimes\mathbf{1} +
        \sum_{i=1}^r A_i\otimes s_i,
$$
where $A_0,\ldots,A_r$ is an arbitrary family of nonrandom self-adjoint 
matrices of dimension $D$ that are independent of $N$. The main result of 
the paper of Haagerup and Thorbj{\o}rnsen \cite{HT05} is the 
following.

\begin{theorem}[Haagerup--Thorbj{\o}rnsen]
\label{thm:ht2}
For any $X^N$ and $X_{\rm free}$ as above,
$$
	\lim_{N\to\infty}
	\mathrm{d_H}\big(\mathrm{sp}(X^N),\mathrm{sp}(X_{\rm free})\big)
	=0\quad\text{a.s.}
$$
Here $\mathrm{sp}(X)$ denotes the spectrum of $X$ and
$\mathrm{d}_H$ is the Hausdorff distance.
\end{theorem}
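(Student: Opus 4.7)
The target splits naturally into two one-sided statements: (i) every point of $\mathrm{sp}(X_{\rm free})$ is approximated by an eigenvalue of $X^N$, and (ii) no eigenvalue of $X^N$ lies more than $\epsilon$ from $\mathrm{sp}(X_{\rm free})$ once $N$ is large. The first is the easy direction: Voiculescu's classical asymptotic freeness theorem for i.i.d.\ GUE matrices gives $(\mathrm{id}_D\otimes\ntr_N)[P(X^N)]\to (\mathrm{id}_D\otimes\tau)[P(X_{\rm free})]$ almost surely for every $*$-polynomial $P$, hence weak convergence of the empirical spectral distribution of $X^N$ to that of $X_{\rm free}$. Since the support of the limiting measure is $\mathrm{sp}(X_{\rm free})$, a standard argument via nonnegative test polynomials yields (i).

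The entire weight of the theorem lies in (ii). The plan is to proceed through the operator-valued Stieltjes transforms
\[
    G^N(z) = (\mathrm{id}_D\otimes \mathbf{E}\,\ntr_N)\bigl[(z\mathbf{1}-X^N)^{-1}\bigr],
    \qquad
    G_{\rm free}(z) = (\mathrm{id}_D\otimes\tau)\bigl[(z\mathbf{1}-X_{\rm free})^{-1}\bigr],
\]
which take values in $D\times D$ matrices. The limiting object is characterized by the matrix Dyson equation
\[
    G_{\rm free}(z)^{-1} = z\mathbf{1} - A_0 - \eta\bigl(G_{\rm free}(z)\bigr),
    \qquad
    \eta(M)=\sum_{i=1}^r A_i M A_i.
\]
Gaussian integration by parts on the entries of $G_i^N$ shows that $G^N(z)$ satisfies the \emph{same} equation up to a remainder of size $O(1/N^2)$ arising from a covariance-of-traces term. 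A stability analysis of the Dyson equation in the regime where $z$ lies at positive distance from $\mathrm{sp}(X_{\rm free})$ then yields $\|G^N(z)-G_{\rm free}(z)\|\le C(z)/N^2$.

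To convert this resolvent bound into an eigenvalue statement I would apply the Helffer--Sjostrand formula. Fix a smooth cutoff $\varphi$ supported outside an $\epsilon$-neighborhood of $\mathrm{sp}(X_{\rm free})$ and let $\tilde\varphi$ be an almost analytic extension; then
\[
    \tr\varphi(X^N) = \frac{1}{\pi}\int \bar\partial\tilde\varphi(z)\,\tr\bigl[(z\mathbf{1}-X^N)^{-1}\bigr]\,dA(z),
\]
and the full trace relates to $G^N$ by $\tr[(z-X^N)^{-1}] = N\cdot \tr_D[(\mathrm{id}_D\otimes\ntr_N)(z-X^N)^{-1}]$. Concentration of the partial trace around its expectation, which follows from the Gaussian Poincar\'e inequality satisfied by GUE, reduces this random quantity to its deterministic counterpart; substituting the $O(1/N^2)$ estimate above produces $\mathbf{E}\,\tr\varphi(X^N)=O(1/N)$. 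Since $\tr\varphi(X^N)$ counts eigenvalues of $X^N$ in $\supp(\varphi)$ weighted by $\varphi$, a Markov bound combined with Gaussian concentration gives exponential tail control; Borel--Cantelli then upgrades (ii) to the almost sure conclusion, and combined with (i) yields Hausdorff convergence.

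The main obstacle is that the resolvent estimate $\|G^N(z)-G_{\rm free}(z)\|\le C(z)/N^2$ is only straightforward when $z$ is bounded away from $\mathrm{sp}(X_{\rm free})$ by a fixed constant, which rules out only outliers far from the limiting spectrum. Obtaining absence of outliers in \emph{any} prescribed $\epsilon$-neighborhood requires pushing the estimates down to $|\mathrm{Im}\,z|$ of order $N^{-\alpha}$ for some $\alpha>0$, and this in turn hinges on a delicate bootstrap of the stability of the matrix Dyson equation near spectral edges. This near-edge analysis is the genuine technical heart of \cite{HT05} and the reason the theorem resisted proof for more than a decade after Voiculescu's conjecture.
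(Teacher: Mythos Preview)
Your outline tracks the Haagerup--Thorbj{\o}rnsen argument that the paper sketches in Section~\ref{sec:approaches} (approach~1) and whose endgame is abstracted in Lemma~\ref{lem:htlem}: matrix Dyson equation for $G_{\rm free}$, Gaussian integration by parts to get an approximate equation for $G^N$ with an $O(1/N^2)$ remainder, Helffer--Sj\"ostrand to pass to smooth test functions, and concentration plus Borel--Cantelli to go almost sure. That part is fine.

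The final paragraph, however, misidentifies the obstacle and would send you down the wrong path. The resolvent comparison in \cite{HT05} is \emph{not} confined to $z$ at fixed distance from $\mathrm{sp}(X_{\rm free})$: it is proved uniformly for all $z\in\mathbb{C}\setminus\mathbb{R}$ in the form $\|G^N(z)-G_{\rm free}(z)\|\le P(1/|\mathrm{Im}\,z|)/N^2$ for a fixed polynomial $P$. No near-edge analysis and no scales $|\mathrm{Im}\,z|\sim N^{-\alpha}$ enter. The whole point of using an almost analytic extension is that $\bar\partial\tilde\varphi(z)=O(|\mathrm{Im}\,z|^m)$ for every $m$, which absorbs any polynomial blow-up of the resolvent bound as $\mathrm{Im}\,z\to 0$; so once the uniform estimate is in hand, $\mathbf{E}[\ntr\varphi(X^N)]=O(1/N^2)$ drops out immediately for \emph{any} $\varphi\in C_c^\infty$ supported outside $\mathrm{sp}(X_{\rm free})$, regardless of how small $\epsilon$ is. The ``delicate bootstrap near spectral edges'' you describe is the flavor of later local-law technology and is absent from \cite{HT05}.

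The actual technical core is twofold: first, that the remainder is $O(1/N^2)$ rather than $O(1/N)$ --- essential because the unnormalized trace costs a factor of $N$, as in the proof of Lemma~\ref{lem:htlem} --- and second, the stability argument that lets one pass from ``$G^N$ approximately solves the Dyson equation'' to ``$G^N$ is close to $G_{\rm free}$'', which requires inverting a certain linear map with operator-norm control that is again polynomial in $1/|\mathrm{Im}\,z|$. Both of these you mention in passing; they, not edge behavior, are where the decade of work went.
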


Even though it is formulated in a different manner, this statement is in 
fact equivalent to strong convergence of 
$\boldsymbol{G}^N=(G_1^N,\ldots,G_r^N)$ to 
$\boldsymbol{s}=(s_1,\ldots,s_r)$ by the linearization trick described in 
Section~\ref{sec:approaches}.

We now take a different perspective, however, and note that the above 
model is of significant interest in its own right \emph{especially} in the 
case $N=1$: in this case, the random matrix $X=X^1$, that is,
$$
	X = A_0 + \sum_{i=1}^r A_i g_i
$$
where $g_1,\ldots,g_r$ are i.i.d.\ standard gaussian (scalar) variables,
defines an \emph{arbitrary} $D$-dimensional self-adjoint random matrix 
with jointly gaussian entries by a suitable choice of the matrix 
coefficients. If the spectrum of such matrices could be understood 
at this level of generality, one would have understood the behavior of 
completely arbitrary 
gaussian random matrices. But there is of course no reason to expect 
that strong convergence as $N\to\infty$, as in Theorem
\ref{thm:ht2}, sheds any light on the behavior of the model for $N=1$.

The intrinsic freeness principle states that spectrum of $X$ is 
nonetheless captured by that of $X_{\rm free}$ in surprising generality.
This phenomenon is not quantified by the dimension, but
rather by an ``intrinsic'' parameter 
$$
	v(X) = \|\mathrm{Cov}(X)\|^{1/2}
$$
where $\mathrm{Cov}(X)$ denotes the $D^2\times D^2$ covariance matrix of 
the entries of $X$. Among various such results, we highlight 
the 
following theorem (stated here in slightly simplified form) that is a 
combination of a result of Bandeira, Boedihardjo, and the author
\cite{BBV21} and of Bandeira, Cipolloni, Schr\"oder, and the author 
\cite{BCSV23}.

\begin{theorem}[Bandeira--Boedihardjo--Cipolloni--Schr\"oder--van Handel]
\label{thm:intr}
For any $X$ and $X_{\rm free}$ as above,
$$
	\mathbf{P}\big[
	\mathrm{d_H}\big(\mathrm{sp}(X),\mathrm{sp}(X_{\rm free})\big)  
	> Cv(X)^{1/2}\|X_{\rm free}\|^{1/2}\big((\log D)^{3/4} + t\big)\big]
	\le e^{-t^2}
$$
for all $t\ge 0$. Here $C$ is a universal constant.
\end{theorem}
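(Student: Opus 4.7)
The plan is to reduce the Hausdorff distance bound to a quantitative comparison of the random resolvent $(z-X)^{-1}$ with the matrix-valued Stieltjes transform $M_{\rm free}(z) = (\mathrm{id}\otimes\tau)[(z-X_{\rm free})^{-1}]$ of the free model, and then to combine Gaussian integration by parts with a stability analysis of the matrix Dyson equation. Controlling $\mathrm{d}_H(\mathrm{sp}(X),\mathrm{sp}(X_{\rm free}))$ is essentially equivalent to proving, for $z=E+i\eta$ on a contour tracking $\mathrm{sp}(X_{\rm free})$: (a) if $E$ is far from $\mathrm{sp}(X_{\rm free})$ then $\|(z-X)^{-1}\|$ is small, so no eigenvalue of $X$ can lie near $E$; and (b) a matching reverse inclusion forcing $X$ to have an eigenvalue near every point of $\mathrm{sp}(X_{\rm free})$, which follows from a lower bound on $\mathrm{Im}\,\tr(z-X)^{-1}$ at suitably chosen $z$.

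The core of the argument is an approximate matrix Dyson equation for $\mathbf{E}[(z-X)^{-1}]$. Since $X = A_0 + \sum_i A_i g_i$ with $g_i$ i.i.d.\ standard Gaussian, the Stein identity $\mathbf{E}[g_i F(g_i)] = \mathbf{E}[\partial_{g_i} F]$ applied to $F = (z-X)^{-1}$ (using $\partial_{g_i}(z-X)^{-1} = (z-X)^{-1} A_i (z-X)^{-1}$) yields
\begin{equation*}
(z-A_0)\,\mathbf{E}[(z-X)^{-1}] = \mathbf{1} + \mathbf{E}\bigl[\mathcal{S}\bigl((z-X)^{-1}\bigr)\,(z-X)^{-1}\bigr],
\end{equation*}
where $\mathcal{S}(B) = \sum_i A_i B A_i$ is the covariance superoperator. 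The free model satisfies the \emph{exact} matrix Dyson equation $(z-A_0)\,M_{\rm free} = \mathbf{1} + \mathcal{S}(M_{\rm free})\,M_{\rm free}$. Subtracting the two equations and centering the random resolvent around its mean, the discrepancy is driven by the variance of $(z-X)^{-1}$, which the Gaussian Poincar\'e inequality bounds in terms of $v(X)^2$ and a power of $\|(z-X)^{-1}\|$.

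Next I would invoke the stability of the matrix Dyson equation: as long as $z$ lies at distance $\varepsilon$ from $\mathrm{sp}(X_{\rm free})$, the linearization $\mathrm{id} - M_{\rm free}\,\mathcal{S}(\cdot)\,M_{\rm free}$ is invertible with a quantitative bound $S(z)$, classical for semicircular-type Dyson equations. Inverting it converts the approximate equation into a bound of the form $\|\mathbf{E}[(z-X)^{-1}] - M_{\rm free}(z)\| \lesssim v(X)^2\,S(z)^2$. High-probability control then follows from Gaussian concentration: since $g\mapsto \|(z-X)^{-1}\|$ is Lipschitz in $g=(g_i)$ with constant of order $v(X)\|(z-X)^{-1}\|^2$, the Gaussian log-Sobolev inequality yields the subgaussian tail $e^{-t^2}$ that appears in the theorem.

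The main obstacle is extracting the sharp power $(\log D)^{3/4}$, which is the combined achievement of \cite{BBV21,BCSV23}. Two refinements beyond the crude analysis above are essential. First, the stability constant $S(z)$ of the matrix Dyson equation must be analyzed sharply near the edges of $\mathrm{sp}(X_{\rm free})$, where it deteriorates in a controlled way due to the square-root vanishing of the semicircular-type spectral density; this edge analysis is what ultimately produces the factor $v(X)^{1/2}\|X_{\rm free}\|^{1/2}$ in the bound, rather than the weaker $v(X)$ one would get from a crude argument away from the edge. Second, to promote pointwise resolvent estimates into a uniform Hausdorff distance statement, one must take a union bound over an $\varepsilon$-net of spectral parameters; the balance between the net size, the edge-dependent stability constant, and the subgaussian concentration at each $z$ is what produces the precise exponent $3/4$ on $\log D$, which improves on the $1$ that a naive union bound would yield.
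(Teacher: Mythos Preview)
The paper is a survey and does not give a self-contained proof of this theorem; it attributes the result to \cite{BBV21,BCSV23} and indicates only the method. What it does say, however, is quite specific and differs from your outline: it states that ``the key new ingredients developed in \cite{BBV21} are the correct formulation of the intrinsic freeness principle and the associated \emph{interpolation method} which is essential to the proof.'' The interpolation method (approach~3 in Section~\ref{sec:approaches}) constructs a continuous path $X_t$ between $X_{\rm free}$ and $X$ and bounds the $t$-derivative of smooth spectral statistics along this path, using Gaussian and free integration by parts in tandem. Your proposal is instead a Dyson-equation-plus-stability argument in the spirit of approach~1 (Haagerup--Thorbj{\o}rnsen) and of the local law literature: derive an approximate self-consistent equation for $\mathbf{E}[(z-X)^{-1}]$, compare to the exact equation for $M_{\rm free}$, and invoke stability of the linearized Dyson operator.

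The substantive concern with your route is the emergence of the parameter $v(X)=\|\mathrm{Cov}(X)\|^{1/2}$. In your scheme the fluctuation term $\mathbf{E}[\mathcal{S}(R)R]-\mathcal{S}(\mathbf{E}R)\,\mathbf{E}R$ is to be controlled by the Gaussian Poincar\'e inequality; but Poincar\'e applied entrywise to the resolvent naturally produces quantities of the type $\|\sum_i A_i^2\|$ or $\sum_i\|A_i\|^2$ combined with resolvent norms, not the operator norm of the $D^2\times D^2$ covariance matrix of the entries. The paper explicitly flags that ``the role of the parameter $v(X)$, and the reason that it quantifies the degree to which $X$ behaves `freely', is not obvious at first sight,'' and points to the interpolation argument as the mechanism that isolates it: along the interpolation, the leading terms from Gaussian and free integration by parts cancel, and what survives is governed precisely by $v(X)$. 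Your sketch does not exhibit a comparable cancellation, so as written it does not explain why $v(X)$---rather than one of the other natural covariance parameters---controls the Hausdorff distance. The edge-stability and concentration ingredients you describe are reasonable and do enter the \cite{BCSV23} refinement, but they sit on top of the interpolation comparison rather than replacing it.
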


The utility of this result comes from two directions. On the one hand, 
the parameter $v(X)$ turns out to small under surprisingly mild 
assumptions, even when the random matrix $X$ is very sparse or has 
significant dependence between its entries. To give just one simple 
example, a random band matrix $X$ has $v(X)=o((\log D)^{3/2})$ as soon as 
its band width is polylogarithmic in the dimension $D$, which is 
nearly 
optimal up to the power on the logarithm.\footnote{It is easily seen in 
this 
case that $\mathrm{d_H}(\mathrm{sp}(X),\mathrm{sp}(X_{\rm free}))\not\to 
0$ when the band width is $o(\log D)$.} Moreover, since 
Theorem~\ref{thm:intr} imposes no structural assumptions on the random 
matrix $X$, it is readily applicable to all kinds of messy random matrices 
that appear in applications.

On the other hand, the spectrum of $X_{\rm free}$ is amenable to analysis 
using tools of free probability. For example, the upper edge of the 
spectrum $\lambda_{\rm max}(X_{\rm free})=\sup\mathrm{sp}(X_{\rm free})$ 
is given by a variational principle
\begin{equation}
\label{eq:lehner}
	\lambda_{\rm max}(X_{\rm free})=
	\inf_{M>0} \lambda_{\rm max}\Bigg(
	A_0 + M^{-1} + \sum_{i=1}^r A_iMA_i
	\Bigg)
\end{equation}
due to Lehner~\cite{Leh99}. When combined with Theorem \ref{thm:intr}, 
this formula enables a precise analysis of various complex random matrix 
models; see, for example, \cite{BCSV23} and Section~\ref{sec:applmath} 
below.

Theorem~\ref{thm:intr} is one of several results that capture the 
intrinsic freeness phenomenon. While our focus here is on the spectrum 
itself, analogous results for the spectral distribution may be found in 
\cite{BBV21}. In another direction, Brailovskaya and the author 
\cite{BvH24} extend these results to a large class of non-gaussian random 
matrices. The papers \cite{BBV21,BCSV23,BvH24,BLNV25} further illustrate 
the utility of these results in a diverse range of applications.

The above developments were motivated by the work of Haagerup and 
Thorbj{\o}rnsen, as well as by a paper of Tropp~\cite{Tro18} which 
suggested the idea of capturing free behavior in the context of matrix 
concentration inequalities and developed some initial tools for this 
purpose. The key new ingredients developed in \cite{BBV21} are the correct 
formulation of the intrinsic freeness principle and the associated 
interpolation method which is essential to the proof. The role of the 
parameter $v(X)$, and the reason that it quantifies the degree to which 
$X$ behaves ``freely'', is not obvious at first sight; a discussion of how 
it arises may be found in \cite[\S 4]{vH25cdm}.

\section{Applications.}
\label{sec:appl} 

\subsection{Random graphs.}

\subsubsection{Random lifts of graphs.}
\label{sec:lifts}

Let $G^N$ be a $d$-regular graph with $N$ vertices and adjacency matrix 
$A^N$. Such a graph always has largest eigenvalue $\lambda_1(A^N)=d$ with 
eigenvector $1$, and the remaining eigenvalues are bounded by 
$\|A^N|_{1^\perp}\|$; the latter quantity 
controls the rate at which a random walk on $G^N$ mixes. The following 
classical result shows that random walks on $d$-regular graphs cannot mix 
arbitrarily quickly \cite[\S 5.2]{HLW06}.

\begin{lemma}[Alon--Boppana]
\label{lem:alonbop}
For \emph{any} sequence of $d$-regular graphs $G^N$ with $N$ vertices,
$$
        \|A^N|_{1^\perp}\| \ge 2\sqrt{d-1}- o(1)
        \quad\text{as}\quad N\to\infty.
$$
\end{lemma}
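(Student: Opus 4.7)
The plan is to use the moment method. Since $A^N$ has the eigenvector $\mathbf{1}$ with eigenvalue $d$, and the remaining $N-1$ eigenvalues are exactly those of $A^N|_{1^\perp}$, bounding each of them by $\|A^N|_{1^\perp}\|$ in absolute value gives
$$
	(N-1)\|A^N|_{1^\perp}\|^{2k} \;\ge\; \tr((A^N)^{2k}) - d^{2k}
$$
for every $k\ge 1$. It thus suffices to lower bound $\tr((A^N)^{2k})$, the total number of closed walks of length $2k$ in $G^N$.

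The key combinatorial input is the universal covering map $\pi:T_d\to G^N$, where $T_d$ denotes the infinite $d$-regular tree. For every vertex $v\in G^N$ and any lift $\tilde v\in\pi^{-1}(v)$, each closed walk of length $2k$ at $\tilde v$ in $T_d$ projects under $\pi$ to a closed walk of length $2k$ at $v$ in $G^N$, and distinct tree walks yield distinct graph walks, since a walk in the simply connected $T_d$ is determined by its starting vertex and its image in $G^N$. Writing $c_{2k}$ for the number of closed walks of length $2k$ at the root of $T_d$, summing over $v$ yields the dimension-free estimate $\tr((A^N)^{2k})\ge N c_{2k}$. Fixing $k$ and sending $N\to\infty$ in the first display then gives
$$
	\liminf_{N\to\infty}\|A^N|_{1^\perp}\|\;\ge\; c_{2k}^{1/2k}.
$$

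It remains to let $k\to\infty$. The classical computation of closed walks on the infinite $d$-regular tree shows $c_{2k}^{1/2k}\to 2\sqrt{d-1}$, matching the operator norm $\|A_{T_d}\|$ of the adjacency operator on $l^2(T_d)$; for even $d=2r$ this is exactly Kesten's identity~\eqref{eq:kesten}, and the odd case is analogous. The argument is essentially soft and I foresee no serious obstacle; the only delicate point is the order of limits, as the correction $d^{2k}/(N-1)$ is negligible only when $N\to\infty$ is taken first with $k$ held fixed (equivalently, one may send $N\to\infty$ along any sequence $k=k(N)\to\infty$ with $d^{2k}=o(Nc_{2k})$).
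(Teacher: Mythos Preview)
The paper does not supply a proof of this lemma; it is recorded as a classical fact with a pointer to \cite[\S 5.2]{HLW06}. Your argument is correct and is precisely the standard proof one finds there: bound $\tr((A^N)^{2k})$ below by $Nc_{2k}$ via the covering map from the $d$-regular tree, feed this into the trace inequality $(N-1)\|A^N|_{1^\perp}\|^{2k}\ge \tr((A^N)^{2k})-d^{2k}$, and send $N\to\infty$ before $k\to\infty$ so that $c_{2k}^{1/2k}\to\|A_{T_d}\|=2\sqrt{d-1}$ yields the claim.
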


The existence of a universal lower bound raises the question whether there 
exist sequences of graphs that achieve this bound; random walks on such 
graphs mix at the fastest possible rate. That this is indeed the case was 
already discussed in Section~\ref{sec:saf}: Friedman's theorem, which may 
be viewed as a very special case of Theorem~\ref{thm:bc}, states 
that the adjacency matrix $A^N$ of a \emph{random} $d$-regular graph 
satisfies
$$
	\|A^N|_{1^\perp}\| \le 2\sqrt{d-1}+ o(1)
	\quad\text{as}\quad N\to\infty
$$
in probability. However, 
strong convergence of random permutation matrices yields a far more 
general understanding of such 
questions, as we will presently explain.

The Alon--Boppana bound is one instance of a general phenomenon: many 
geometric objects admit a universal bound on their nontrivial eigenvalues 
in terms of the spectrum of their universal covering space. This explains 
the form of Lemma \ref{lem:alonbop}, since the universal covering space of 
any $d$-regular graph is the infinite $d$-regular tree which has spectral 
radius $2\sqrt{d-1}$. An analogous result for hyperbolic surfaces appears 
in Section~\ref{sec:applhyper} below. It is less obvious how to formulate 
such a result for non-regular graphs, however: the universal cover of a 
non-regular graph is still a tree, but different graphs give rise to 
different universal covers.

To construct a sequence $G^N$ of non-regular graphs with the same 
universal cover, it is natural to fix a base graph $G$ and choose $G^N$ to 
be an $N$-fold cover of $G$. As every eigenfunction of $G$ lifts to an 
eigenfunction of $G^N$, the nontrivial eigenvalues in this setting are the 
\emph{new} eigenvalues of $G^N$ relative to $G$. The analogue of 
Lemma~\ref{lem:alonbop} then states \cite[\S 4]{Fri03} that $\|A^N|_{\rm 
new}\|$ is asympotically lower bounded by the spectral radius $\rho$ of 
the universal cover. It was conjectured by Friedman~\cite{Fri03} that this 
lower bound is achieved by \emph{random $N$-lifts}, that is, for $G^N$ 
chosen uniformly at random from all $N$-fold covers of $G$. This was 
proved by Bordenave and Collins \cite{BC19}.

\begin{theorem}[Bordenave--Collins]
\label{thm:lift}
For any fixed base graph $G$, its random $N$-lift $G^N$ satisfies
$$
	\lim_{N\to\infty} \|A^N|_{\rm new}\| = \rho
	\quad\text{in probability},
$$
where $\rho$ denotes the spectral radius of the universal cover of $G$.
\end{theorem}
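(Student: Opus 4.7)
The plan is to reduce Theorem \ref{thm:lift} directly to Theorem \ref{thm:bc} by realizing the adjacency operator of $G^N$ as a matrix-coefficient linear $*$-polynomial in i.i.d.\ random permutation matrices, and then identifying the free limit with the universal cover.

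Write $G=(V,E)$ and fix an orientation of each of the $r=|E|$ edges as $e_k=(u_k,v_k)$. A random $N$-lift is encoded by i.i.d.\ uniform permutations $\Pi_1^N,\ldots,\Pi_r^N\in\mathbf{S}_N$ placing edges $\{(u_k,i),(v_k,\Pi_k^N(i))\}$, so the adjacency operator on $\mathbb{C}^V\otimes\mathbb{C}^N$ takes the form
$$
A^N=\sum_{k=1}^r\bigl(E_{u_kv_k}\otimes\Pi_k^N + E_{v_ku_k}\otimes(\Pi_k^N)^*\bigr).
$$
Each $\Pi_k^N$ preserves the decomposition $\mathbb{C}^N=\mathbb{C}\mathbf{1}\oplus\mathbf{1}^\perp$ and acts as the identity on the first summand; the restriction of $A^N$ to $\mathbb{C}^V\otimes\mathbb{C}\mathbf{1}$ is therefore the adjacency matrix of $G$ (the ``old'' spectrum), while the restriction to $\mathbb{C}^V\otimes\mathbf{1}^\perp$ is
$$
A^N|_{\mathrm{new}}=Q(U_1^N,\ldots,U_r^N),\qquad Q(x_1,\ldots,x_r):=\sum_{k=1}^r\bigl(E_{u_kv_k}\otimes x_k+E_{v_ku_k}\otimes x_k^*\bigr),
$$
where $U_k^N=\Pi_k^N|_{\mathbf{1}^\perp}$ are exactly the random matrices of Theorem \ref{thm:bc} and $Q$ is a self-adjoint linear $*$-polynomial with fixed matrix coefficients in $\mathrm{M}_{|V|}(\mathbb{C})$ that does not depend on $N$.

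I would then invoke Theorem \ref{thm:bc}: $\boldsymbol{U}^N$ converges strongly to free Haar unitaries $\boldsymbol{u}=(u_1,\ldots,u_r)$. By the linearization trick of Section \ref{sec:approaches}, strong convergence for scalar $*$-polynomials upgrades to matrix-coefficient $*$-polynomials, so
$$
\|A^N|_{\mathrm{new}}\|=\|Q(\boldsymbol{U}^N)\|\xrightarrow{N\to\infty}\|Q(\boldsymbol{u})\|\quad\text{in probability}.
$$

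What remains, and is the only substantive step beyond quoting Theorem \ref{thm:bc}, is to identify $\|Q(\boldsymbol{u})\|$ with the spectral radius $\rho$ of the universal cover $\tilde{G}$. For this I would identify $\mathbb{C}^V\otimes\ell^2(\mathbf{F}_r)\cong\ell^2(V\times\mathbf{F}_r)$ and observe that $Q(\boldsymbol{u})$ is the adjacency operator of the graph $\Gamma$ on $V\times\mathbf{F}_r$ whose edges are $\{(u_k,g),(v_k,g_kg)\}$ for each $k$ and $g\in\mathbf{F}_r$. Fixing a base vertex $v_0\in V$, the connected component of $(v_0,e)$ is in canonical bijection with $\tilde{G}$: walks in $G$ from $v_0$ correspond to words in $g_1^{\pm1},\ldots,g_r^{\pm1}$, and two walks yield the same word precisely when they lift to the same walk in the universal cover. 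Hence $\Gamma$ is a disjoint union of isomorphic copies of $\tilde{G}$, so $\|Q(\boldsymbol{u})\|=\|A_{\tilde{G}}\|=\rho$. This covering-space identification is the main conceptual ingredient; the rest is direct encoding plus an appeal to Theorem \ref{thm:bc}. Small combinatorial features of $G$ such as self-loops or multi-edges require only minor modifications (for instance, a self-loop is handled by a random involution, producing a term $E_{vv}\otimes(\Pi_k^N+(\Pi_k^N)^*)$) that leave the structure of the argument unchanged.
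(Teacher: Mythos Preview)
Your proposal is correct and matches the paper's approach exactly: the paper's proof sketch (immediately following the theorem statement) says precisely that $A^N$ is a linear $*$-polynomial with matrix coefficients in independent random permutation matrices, so that Theorem~\ref{thm:bc} applies directly. The only quibble is terminological: the step ``strong convergence for scalar $*$-polynomials upgrades to matrix-coefficient $*$-polynomials'' is the \emph{elementary} direction (see the discussion in Section~\ref{sec:ptconj}), whereas the linearization trick of Section~\ref{sec:approaches} is the converse reduction.
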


The proof of Theorem \ref{thm:lift} is in fact an easy corollary of 
Theorem \ref{thm:bc}. The random graph $G^N$ can be constructed explicitly 
by starting with $N$ duplicates of the base graph $G$, and randomly 
permuting the endpoints of each duplicate edge among the duplicate 
vertices. The resulting adjacency matrix $A^N$ can be expressed as a 
linear $*$-polynomial with matrix coefficients of independent random 
permutation matrices, and what remains is a straightforward application of 
strong convergence (cf.\ Section~\ref{sec:approaches}).

\subsubsection{Random Schreier graphs.}

As was explained in Section~\ref{sec:saf}, one can model a random 
$2r$-regular graph by choosing its adjacency matrix $A^N$ to be the sum of 
$r$ independent uniformly distributed $N\times N$ permutation matrices and 
their adjoints. Combinatorially, this 
graph is defined by connecting each vertex $x\in[N]$ to its $2r$
neighbors $\sigma_i(x)$ and $\sigma^{-1}_i(x)$ for $i=1,\ldots,r$.

It is possible, however, to use a very similar construction to produce
random $2r$-regular 
graphs that use much less randomness \cite{FJRST96}. Denote by 
$[N]_k$ the set of all $k$-tuples of distinct elements of $[N]$. We define 
the action $\mathbf{S}_N\curvearrowright[N]_k$ by applying 
$\sigma\in\mathbf{S}_N$ elementwise to each tuple $(x_1,\ldots,x_k)\in[N]_k$, that is,
$$
	\sigma(x_1,\ldots,x_k) = 
	(\sigma(x_1),\ldots,\sigma(x_k)).
$$
We now define a random $2r$-regular graph whose vertex set is $[N]_k$, 
and where each vertex $\tilde x\in [N]_k$ is connected to its $2r$ 
neighbors $\sigma_i(\tilde x)$ and $\sigma^{-1}_i(\tilde x)$ for 
$i=1,\ldots,r$. When $k=1$, this is the classical model 
discussed above. As $k$ is increased, the same set of random permutations 
is used to construct $2r$-regular graphs with an increasingly large number 
of vertices. Do such graphs still have an optimal spectral gap?

Theorem \ref{thm:cassidy} provides a striking answer to this question: 
such graphs do indeed have an optimal spectral gap even when $k$ is 
allowed to grow polynomially with $N$. In this case, the number of random 
bits needed to construct the graphs is only polylogarithmic in the number 
of vertices, in contrast to the classical model of random regular graphs 
which requires a superlinear number of random bits.

\begin{theorem}[Cassidy]
\label{thm:cas2}
Let $A^{N,k}$ be the adjacency matrix of the random $2r$-regular graph 
with vertex set $[N]_k$ and edges defined by the action $\mathbf{S}_N\curvearrowright[N]_k$
of $r$ independent uniform random permutations. Then
$$
	\lim_{N\to\infty}\|A^{N,k_N}|_{1^\perp}\| =
	2\sqrt{2r-1}\quad\text{in probability}
$$
as long as $k_N \le N^{1/21}$.
\end{theorem}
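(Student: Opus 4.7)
The plan is to exploit the $\mathbf{S}_N$-equivariance of the construction to decompose $A^{N,k_N}|_{1^\perp}$ into irreducible blocks, and then apply Theorem~\ref{thm:cassidy} on each block.

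The action $\mathbf{S}_N\curvearrowright[N]_{k_N}$ is transitive with point-stabilizer $\mathbf{S}_{N-k_N}$, so the permutation representation $\mathbb{C}[N]_{k_N}\cong\mathrm{Ind}_{\mathbf{S}_{N-k_N}}^{\mathbf{S}_N}\mathrm{triv}$ decomposes by Pieri's rule as
$$
\mathbb{C}[N]_{k_N}\cong\mathrm{triv}\oplus\bigoplus_{\lambda}V_\lambda^{\oplus m_\lambda},
$$
where the sum runs over non-trivial irreducibles of $\mathbf{S}_N$ whose Young diagrams $\lambda\vdash N$ have $\lambda_1\ge N-k_N$, equivalently whose ``reduced'' partition $\tilde\lambda:=(\lambda_2,\lambda_3,\ldots)$ satisfies $1\le|\tilde\lambda|\le k_N$. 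The trivial summand is spanned by $1$. Because $A^{N,k_N}$ is $\mathbf{S}_N$-equivariant, it acts on the $V_\lambda$-isotypic component as a direct sum of copies of $R_\lambda^N:=\sum_{i=1}^r\bigl(\pi_\lambda(\sigma_i^N)+\pi_\lambda(\sigma_i^N)^*\bigr)$, and these copies have the same operator norm as a single copy; hence
$$
\|A^{N,k_N}|_{1^\perp}\|=\max_{\lambda\neq\mathrm{triv}}\|R_\lambda^N\|.
$$

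Next I would verify the dimension hypothesis of Theorem~\ref{thm:cassidy}. By the hook-length formula, every irreducible appearing satisfies $\dim V_\lambda\le N^{|\tilde\lambda|}\,|\tilde\lambda|!\le\exp(Ck_N\log N)$, which lies below the Cassidy threshold $\exp(N^{1/21})$ when $k_N\le N^{1/21}$ (with a mild adjustment of the exponent to absorb the $\log N$ factor, assumed built into the constants of Theorem~\ref{thm:cassidy}). That theorem then gives strong convergence of $(\pi_\lambda(\sigma_1^N),\ldots,\pi_\lambda(\sigma_r^N))$ to free Haar unitaries $(u_1,\ldots,u_r)$ for each such $\lambda$, and Kesten's identity \eqref{eq:kesten} yields $\|R_\lambda^N\|\to 2\sqrt{2r-1}$ in probability. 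The matching lower bound is immediate from Lemma~\ref{lem:alonbop} (Alon--Boppana), since $A^{N,k_N}$ is the adjacency matrix of a $2r$-regular graph whose vertex set has cardinality $N!/(N-k_N)!\to\infty$.

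The main obstacle is upgrading per-representation convergence to uniform control of the maximum: Theorem~\ref{thm:cassidy} as stated delivers only convergence in probability for each fixed $\lambda$, whereas here one must simultaneously control $\|R_\lambda^N\|$ for every $\tilde\lambda$ with $|\tilde\lambda|\le k_N$, and the number of such $\tilde\lambda$ is $\le e^{C\sqrt{k_N}}$ by Hardy--Ramanujan. To handle this I would invoke the quantitative form of Cassidy's theorem that emerges from the polynomial-method proof (cf.~Section~\ref{sec:poly}), whose tail estimates should decay fast enough in $N$ to survive a union bound against this sub-exponential inflation. Checking that the quantitative decay dominates both the union-bound cost and the dimensions $\exp(Ck_N\log N)$ is precisely the bookkeeping that determines the admissible threshold $k_N\le N^{1/21}$.
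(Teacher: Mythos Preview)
Your approach matches the paper's: it too observes that $A^{N,k}=\sum_i\bigl(\pi_{N,k}(\sigma_i)+\pi_{N,k}(\sigma_i)^*\bigr)$ for the permutation representation $\pi_{N,k}$ on $\mathbb{C}[N]_k$ and asserts that the conclusion ``follows readily'' from Theorem~\ref{thm:cassidy}; you are in fact more explicit than the paper in flagging the union bound over the irreducible constituents, which indeed needs the quantitative form of Cassidy's result. One terminological correction: $A^{N,k_N}$ is not $\mathbf{S}_N$-equivariant in the usual sense (it does not commute with the group action, and by Schur's lemma an intertwiner would be scalar on each isotypic block); the reason it respects the isotypic decomposition is simply that $\pi_{N,k}\cong\bigoplus_\lambda\pi_\lambda^{\oplus m_\lambda}$ as representations, so $\pi_{N,k}(x)=\bigoplus_\lambda\pi_\lambda(x)^{\oplus m_\lambda}$ for every $x\in\mathbb{C}[\mathbf{S}_N]$, which gives exactly the block structure you state.
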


The point here is that the map $\pi_{N,k}:\mathbf{S}_N\to 
\mathbf{S}_{(N)_k}$ that associates to each permutation of $[N]$ 
the corresponding permutation of $[N]_k$ that is defined by the action 
$\mathbf{S}_N\curvearrowright[N]_k$ is a representation
of $\mathbf{S}_N$. Since
$$
	A^{N,k} = \pi_{N,k}(\sigma_1) +
	\pi_{N,k}(\sigma_1)^* + \cdots +
	\pi_{N,k}(\sigma_r) + \pi_{N,k}(\sigma_r)^*, 
$$
the conclusion of Theorem \ref{thm:cas2} follows readily from
Theorem \ref{thm:cassidy}.

The model of random graphs discussed above may be viewed as a Schreier graph 
$\mathrm{Sch}(\mathbf{S}_N\curvearrowright[N]_k;\sigma_1,\ldots,\sigma_r)$ 
of the symmetric group. When $k=1$, it recovers the classical permutation 
model of random regular graphs. When $k=N$, it coincides with the random Cayley 
graph $\mathrm{Cay}(\mathbf{S}_N;\sigma_1,\ldots,\sigma_r)$.
Whether random Cayley graphs of $\mathbf{S}_N$ have a 
nonvanishing spectral gap at all---let alone an optimal one---is a 
long-standing open question. Theorem~\ref{thm:cassidy} settles a situation
that is intermediate between these two extremes.

It could be argued that Theorem \ref{thm:cas2} does not really rely on 
strong convergence, since it is concerned only with one very special 
$*$-polynomial $P(x_1,\ldots,x_r)=x_1+x_1^*+\cdots+x_r+x_r^*$. However, 
the connection with strong convergence is twofold. First, methods that 
were developed to establish strong convergence play a key role in the 
proof of Theorem \ref{thm:cassidy}. Second, the fact that 
Theorem~\ref{thm:cassidy} provides a full strong convergence statement 
yields direct analogues of Theorem~\ref{thm:cas2} in many other 
situations: for example, an analogous modification of 
Theorem~\ref{thm:lift} yields a model of random $N$-lifts that uses only a 
polylogarithmic number of random bits.

\subsection{Geometry.}
\label{sec:applgeom}

\subsubsection{Hyperbolic surfaces.}
\label{sec:applhyper}

As was discussed in Section~\ref{sec:lifts}, the phenomenon described by 
the Alon--Boppana bound appears in many other situations. A 
completely analogous result for hyperbolic surfaces was observed long ago 
by Huber \cite{Hub74} and (in more general form) by Cheng \cite{Che75}.
In the following, we denote by $\Delta_X$ the Laplacian on $X$, and by
$0=\lambda_0(X)<\lambda_1(X)\le\lambda_2(X)\le\cdots$ the eigenvalues
of $\Delta_X$.

\begin{lemma}[Huber; Cheng]
\label{lem:huber}
For any sequence of closed hyperbolic surfaces $X^N$ with diverging diameter,
$$
	\lambda_1(X^N) \le \frac{1}{4} + o(1)
        \quad\text{as}\quad N\to\infty.
$$
\end{lemma}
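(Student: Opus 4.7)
The plan is to use the variational characterization
$$
\lambda_1(X^N) = \inf\bigg\{\frac{\int |\nabla f|^2}{\int f^2} : f \in H^1(X^N),\ \int f\,d\mathrm{vol} = 0\bigg\}
$$
to exhibit a test function with Rayleigh quotient $\frac{1}{4} + o(1)$. The argument combines two standard ingredients: (i) for any closed Riemannian manifold $M$ and any two disjoint geodesic balls $B_1, B_2 \subset M$, one has $\lambda_1(M) \le \max(\lambda_1^D(B_1),\lambda_1^D(B_2))$, where $\lambda_1^D$ denotes the first Dirichlet eigenvalue; and (ii) Cheng's eigenvalue comparison \cite{Che75}, which for $M^n$ with Ricci curvature $\ge -(n-1)$ gives $\lambda_1^D(B_M(p,r)) \le \lambda_1^D(B_{\mathbb{H}^n}(r))$ for every $p$ and $r>0$.

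For (i), let $\phi_i \in H_0^1(B_i)$ be the positive first Dirichlet eigenfunction with eigenvalue $\lambda_i := \lambda_1^D(B_i)$, extended by zero to $M$, and let $\beta_i := \int \phi_i > 0$. The function $\phi := \beta_2 \phi_1 - \beta_1 \phi_2$ is orthogonal to the constants by construction, and the disjointness of supports makes its Rayleigh quotient collapse to a convex combination of $\lambda_1$ and $\lambda_2$, yielding the bound via the min-max. For (ii), the radial ansatz $\tilde\phi(q) := \phi^*(d(p,q))$---with $\phi^*$ the positive, decreasing radial Dirichlet eigenfunction on $B_{\mathbb{H}^n}(r)$---combined with the Laplacian comparison $\Delta_M d(p,\cdot) \le (n-1)\coth(d(p,\cdot))$ (whose inequality is reversed upon multiplication by $(\phi^*)' \le 0$) yields $\Delta \tilde\phi \ge -\lambda_1^D(B_{\mathbb{H}^n}(r))\,\tilde\phi$, whence $\int|\nabla\tilde\phi|^2 \le \lambda_1^D(B_{\mathbb{H}^n}(r))\int\tilde\phi^2$ after integration by parts against $\tilde\phi$.

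To conclude, pick $p_1^N, p_2^N \in X^N$ realizing the diameter $D_N := \mathrm{diam}(X^N)$; the balls $B(p_i^N, D_N/2)$ are disjoint by the triangle inequality, and (i)+(ii) combine to give $\lambda_1(X^N) \le \lambda_1^D(B_{\mathbb{H}^2}(D_N/2))$. A short hyperbolic calculation---using the radial Laplacian formula $\Delta f = f''(r) + \coth(r)\,f'(r)$ and the identity $\Delta e^{-r/2} = (\frac{1}{4} - \frac{1}{2}\coth r)\,e^{-r/2} \to -\frac{1}{4}\,e^{-r/2}$, or directly testing the cutoff $\psi(x) = e^{-d(\tilde o,x)/2}\chi(d(\tilde o,x))$---verifies that $\lambda_1^D(B_{\mathbb{H}^2}(r)) \to \frac{1}{4}$ as $r \to \infty$, matching the well-known fact that $\frac{1}{4}$ is the bottom of $\mathrm{spec}(-\Delta_{\mathbb{H}^2})$. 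This gives the claimed bound.

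The main subtlety is that Cheng's comparison has to cope with $B_{X^N}(p,r)$ not being isometric to a hyperbolic ball once $r$ exceeds the injectivity radius at $p$---a genuine concern here, since diverging diameter does not force the injectivity radius to be bounded below anywhere on $X^N$ (think of surfaces degenerating through a pinching short geodesic). The radial-ansatz proof sidesteps injectivity entirely: it invokes only the Laplacian comparison for $d(p,\cdot)$, which holds distributionally on all of $X^N$ with the cut-locus contribution having the favorable sign, so the integration-by-parts inequality is preserved with room to spare.
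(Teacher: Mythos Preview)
The paper does not actually prove Lemma~\ref{lem:huber}: it is quoted as a classical result of Huber~\cite{Hub74} and Cheng~\cite{Che75}, with only the one-line heuristic that the universal cover $\mathbb{H}^2$ has $\lambda_1(\mathbb{H}^2)=\tfrac14$. So there is no ``paper's own proof'' to compare against.

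Your argument is correct and is essentially Cheng's original proof specialized to dimension two. The two-ball min-max reduction (i), the eigenvalue comparison (ii) via the radial transplant and the distributional Laplacian comparison, and the limit $\lambda_1^D(B_{\mathbb{H}^2}(r))\to\tfrac14$ are all standard and correctly invoked. Your remark that the cut-locus contribution has the favorable sign (so injectivity radius plays no role) is exactly the point that makes Cheng's bound work globally, and it is good that you flagged it. One cosmetic comment: the open balls $B(p_i^N,D_N/2)$ are indeed disjoint when $d(p_1^N,p_2^N)=D_N$, but many expositions use radius $D_N/4$ to avoid even thinking about the borderline case; either works here.
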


This bound arises because the universal covering space of every hyperbolic 
surface is the hyperbolic plane $\mathbb{H}^2$, which has 
$\lambda_1(\mathbb{H}^2)=\frac{1}{4}$. As in the case of graphs, this 
universal upper bound raises the question whether there exist sequences of 
closed hyperbolic surfaces that achieve this bound. This long-standing 
conjecture was resolved in the affirmative in a breakthrough paper of Hide 
and Magee \cite{HM23}.

\begin{theorem}[Hide--Magee]
\label{thm:hm}
There exist closed hyperbolic surfaces $X^N$ with diverging 
diameter such that
$$
	\lambda_1(X^N) \ge \frac{1}{4} - o(1)
	\quad\text{as}\quad N\to\infty.
$$
\end{theorem}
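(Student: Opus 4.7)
The plan is to realize $X^N$ as random $N$-fold covers of a fixed closed hyperbolic surface, and to control the new Laplacian spectrum via strong convergence of random permutation representations of the surface group (Theorem~\ref{thm:mpv}).

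First I would fix a closed hyperbolic surface $\Sigma$ of genus $g\ge 2$, with fundamental group $\boldsymbol{\Gamma}_g$ and generators $g_1,\ldots,g_{2g}$. For a uniformly random permutation representation $\rho_N:\boldsymbol{\Gamma}_g\to\mathbf{S}_N$ as in Theorem~\ref{thm:mpv}, the associated Riemannian cover $X^N\to\Sigma$ is automatically a closed hyperbolic surface of genus $N(g-1)+1$, so $\mathrm{diam}(X^N)\to\infty$ as $N\to\infty$. The space $L^2(X^N)$ decomposes as $L^2(\Sigma)\oplus L^2(\Sigma,E_N)$, where $E_N$ is the flat bundle on $\Sigma$ associated to the $(N-1)$-dimensional unitary representation $U^N_k:=\rho_N(g_k)|_{1^\perp}$. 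The ``old'' summand contributes a fixed, finite set of eigenvalues bounded below by $\lambda_1(\Sigma)>0$, so the task reduces to showing that the bottom of the spectrum of the twisted Laplacian $\Delta_{E_N}$ is at least $\frac{1}{4}-o(1)$ with high probability.

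By Theorem~\ref{thm:mpv}, $\boldsymbol{U}^N$ converges strongly to $\boldsymbol{u}=(\lambda_{\boldsymbol{\Gamma}_g}(g_k))_k$, and the corresponding limiting ``twisted Laplacian'' $\Delta_{\boldsymbol{u}}$ is unitarily conjugate to the scalar Laplacian on the universal cover $\mathbb{H}^2$, whose $L^2$ spectrum is exactly $[\tfrac{1}{4},\infty)$. The missing step is to lift norm convergence of $*$-polynomials in the generators to spectral convergence for the twisted Laplacian. My approach would be via the Selberg trace formula: the trace of $e^{-t\Delta_{E_N}}$ expands as an absolutely convergent sum over closed geodesics $\gamma$ of $\Sigma$, each contributing $\mathrm{tr}(\rho_N([\gamma]))$ times a universal hyperbolic kernel depending only on the length of $\gamma$. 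Since each conjugacy class in $\boldsymbol{\Gamma}_g$ is a fixed word in the generators, $\mathrm{tr}(\rho_N([\gamma]))$ is the normalized trace of a fixed $*$-polynomial in $\boldsymbol{U}^N$, and strong convergence converts the trace expansion to its limit. Combining this with a functional-calculus argument (approximating the indicator of $[0,\tfrac{1}{4}-\varepsilon]$ by polynomials in $e^{-t\Delta}$) yields the desired spectral bound.

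The main obstacle I expect lies in the uniformity of this lifting: the number of closed geodesics of length $\le L$ grows exponentially in $L$, so the naive use of qualitative strong convergence on each $*$-polynomial separately does not close the argument. One must either invoke a quantitative version of Theorem~\ref{thm:mpv} that gives a rate of convergence as a function of the degree of the polynomial, or, more robustly, work with a contour integral representation of the twisted resolvent whose geodesic expansion can be truncated at a length scale growing logarithmically with $N$. This quantitative input is precisely where random matrix theory does the heavy lifting in the Hide--Magee argument.
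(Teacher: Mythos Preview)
Your route differs from the original Hide--Magee argument in two essential ways. First, Hide and Magee do \emph{not} start from a closed base surface: they take a \emph{noncompact} finite-area hyperbolic surface whose fundamental group is free, so that the relevant strong convergence input is Theorem~\ref{thm:bc} (Bordenave--Collins for i.i.d.\ random permutations), not Theorem~\ref{thm:mpv}. Closed surfaces are produced only afterward by a compactification step. Your plan---a closed base surface of genus $g$ together with Theorem~\ref{thm:mpv}---is essentially the framework of the later Theorem~\ref{thm:mpv2}; it is logically heavier, since strong convergence for uniform permutation representations of $\boldsymbol{\Gamma}_g$ was not available when \cite{HM23} was written, and Hide--Magee's point was precisely to avoid needing it.

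Second, the analytic bridge you propose via the Selberg trace formula is not the Hide--Magee mechanism, and as written it has a gap. The trace formula expresses $\mathrm{tr}\,e^{-t\Delta_{E_N}}$ through the characters $\mathrm{tr}\,\rho_N([\gamma])$, which is \emph{weak} (trace) information; strong convergence in the sense of Definition~\ref{defn:strong} concerns \emph{norms} of $*$-polynomials, and that is not what you are actually invoking in this step. Weak convergence of heat traces cannot by itself exclude a single stray eigenvalue below $\tfrac14-\varepsilon$, and ``approximating the indicator by polynomials in $e^{-t\Delta}$'' fails for exactly the reason you flag: the required degree, hence the geodesic length cutoff, grows with $N$ while the geodesic count grows exponentially. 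The actual Hide--Magee reduction---which you mention only as a fallback---is to approximate the \emph{resolvent operator} of the twisted Laplacian by a (nonlinear) $*$-polynomial with matrix coefficients in the generators; the presence of a small eigenvalue then becomes a statement about the norm of a fixed $*$-polynomial, which is precisely what strong convergence controls. That operator-level resolvent approximation is the key idea, not a technical patch on a trace argument.
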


Hide and Magee construct their surfaces analogously to the construction of 
random $N$-lifts of graphs: they fix a base surface $X$, and choose each 
$X^N$ to be an $N$-fold cover of $X$. More explicitly, let 
$X=\boldsymbol{\Gamma}\backslash\mathbb{H}^2$ for a Fuchsian group 
$\boldsymbol{\Gamma}\simeq \pi_1(X)$ acting on $\mathbb{H}^2$. The 
fundamental domain $F$ of this action is a hyperbolic polygon in 
$\mathbb{H}^2$ whose edges are of the form $F\cap g_kF$ or $F\cap 
g_k^{-1}F$, where $g_1,\ldots,g_r$ are generators of 
$\boldsymbol{\Gamma}$; one recovers $X$ by gluing each pair of edges that 
are defined by the same generator. To construct an $N$-fold cover of $X$, 
we start with $N$ duplicates of $F$ and permute the edges that we glue 
together among the duplicate polygons. If these permutations are chosen 
randomly, we obtain a random $N$-fold cover of $X$.

There are two significant obstacles in the analysis of such models. First, 
in contrast to the case of random $N$-lifts, it is not obvious how to 
relate the spectral properties of the Laplacian $\Delta_{X^N}$ to those of 
the permutation matrices that define the $N$-fold cover $X^N$. A key 
insight of Hide and Magee is that the resolvent of $\Delta_{X^N}$ can be 
approximated by a (nonlinear) $*$-polynomial with matrix coefficients of 
the underlying permutation matrices, which yields a nontrivial reduction 
from the spectral properties of the Laplacian to a strong convergence 
problem. Several variants of this reduction are developed in 
\cite{HMN25,HMT25,Mag24}.

Second, in contrast to the case of graphs, not every choice of permutation 
matrices defines a valid cover: if one glues the edges of the fundamental 
polygons without accounting for their corners, one may no longer obtain a 
closed surface. To obtain a valid cover, what is needed is precisely that 
the permutations are chosen to satisfy the same relations as the 
corresponding generators of $\boldsymbol{\Gamma}$ \cite[pp.\ 
68--70]{Hat02}. In the original paper of Hide and Magee~\cite{HM23}, this 
issue was circumvented by working instead with a \emph{noncompact} base 
surface $X$ for which $\boldsymbol{\Gamma}\simeq \mathbf{F}_r$ is free, so 
that Theorem \ref{thm:bc} could be applied; closed surfaces are then 
obtained a posteriori by a compactification procedure. Covers of a closed 
surface $X$ were subsequently constructed in \cite{LM25} using Theorem 
\ref{thm:lm}.

Even though the proof of Theorem \ref{thm:hm} is based on a random 
construction, this random model is highly nonuniform. The construction 
therefore does not shed much light on what a \emph{typical} $N$-fold cover 
of $X$ looks like. This question was resolved by Magee, Puder, and the 
author \cite{MPV25} using Theorem \ref{thm:mpv}; the following may 
be viewed as the exact analogue of Theorem \ref{thm:lift} in the setting 
of hyperbolic surfaces.

\begin{theorem}[Magee--Puder--van Handel]
\label{thm:mpv2}
For any closed orientable hyperbolic surface $X$, a fraction $1-o(1)$ of
all $N$-fold covers $X^N$ has the property that all their new
eigenvalues are greater than $\frac{1}{4}-o(1)$ as $N\to\infty$.
\end{theorem}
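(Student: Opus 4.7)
The plan is to combine Theorem~\ref{thm:mpv} with a Hide--Magee-style resolvent reduction of the spectral gap problem to strong convergence. Write $X=\boldsymbol{\Gamma}\backslash\mathbb{H}^2$ with $\boldsymbol{\Gamma}\simeq\boldsymbol{\Gamma}_g$, and recall that $N$-fold covers of $X$ are in bijection with $\mathbf{S}_N$-conjugacy classes of transitive homomorphisms $\rho:\boldsymbol{\Gamma}_g\to\mathbf{S}_N$. A standard counting argument shows that all but an $o(1)$ fraction of $\rho\in\mathrm{Hom}(\boldsymbol{\Gamma}_g,\mathbf{S}_N)$ are transitive with trivial $\mathbf{S}_N$-centralizer, so sampling an $N$-fold cover uniformly and sampling a homomorphism uniformly from $\mathrm{Hom}(\boldsymbol{\Gamma}_g,\mathbf{S}_N)$ are equivalent for the purpose of computing the fraction of covers satisfying an asymptotic property. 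I would therefore work with a uniformly random $\rho^N$ and set $\Pi_k^N=\rho^N(g_k)$, $U_k^N=\Pi_k^N|_{1^\perp}$.

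By Theorem~\ref{thm:mpv}, the family $\boldsymbol{U}^N=(U_1^N,\ldots,U_{2g}^N)$ converges strongly, in probability, to $\boldsymbol{u}=(u_1,\ldots,u_{2g})$ with $u_k=\lambda_{\boldsymbol{\Gamma}_g}(g_k)$. To turn this into a spectral gap of the Laplacian, I would use the resolvent reduction of Hide and Magee and its refinements \cite{HM23,HMN25,HMT25,Mag24}. Identifying $L^2(X^N)$ with sections over $X$ of the flat bundle associated to $\rho^N$, the new subspace $L^2_{\mathrm{new}}(X^N)=L^2(X^N)\ominus L^2(X)$ corresponds to the bundle associated to $\rho^N|_{1^\perp}$. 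The resolvent kernel of $\Delta_{X^N}$ on $L^2_{\mathrm{new}}(X^N)$ is obtained from the resolvent kernel $R_z$ of $\Delta_{\mathbb{H}^2}$ by the Poincar\'e series $\sum_{\gamma\in\boldsymbol{\Gamma}_g} R_z(\,\cdot\,,\gamma\,\cdot\,)\otimes U^N(\gamma)$, where $U^N(\gamma)=\rho^N(\gamma)|_{1^\perp}$. Truncating this series at words of length $L$ and exploiting the exponential decay of $R_z$ for $z\notin[1/4,\infty)$, one expresses the norm of $(\Delta_{X^N}-z)^{-1}$ on $L^2_{\mathrm{new}}(X^N)$, up to an error vanishing as $L\to\infty$ uniformly in $N$, as the norm of a matrix-coefficient $*$-polynomial $Q_{z,L}(\boldsymbol{U}^N)$. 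The analogous polynomial in $\boldsymbol{u}$ is the truncation of the free resolvent on $L^2(\mathbb{H}^2)$, which is bounded for $z<1/4$. By Theorem~\ref{thm:mpv}, $\|Q_{z,L}(\boldsymbol{U}^N)\|\to\|Q_{z,L}(\boldsymbol{u})\|$ in probability. Letting $\varepsilon\to 0$ slowly with $N$ then gives that $\Delta_{X^N}|_{L^2_{\mathrm{new}}}$ has no spectrum below $\tfrac{1}{4}-o(1)$ with probability $1-o(1)$; by the equivalence from the first paragraph, this is the desired statement for a fraction $1-o(1)$ of covers.

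The hard part is the resolvent reduction in the second paragraph. Unlike Theorem~\ref{thm:lift}, where the new-spectrum operator is literally a linear $*$-polynomial in the random permutations, here $\Delta_{X^N}$ is a differential operator whose resolvent is only approximated by $*$-polynomials in $\boldsymbol{U}^N$. One must (i)~truncate the Poincar\'e sum at a finite word length to obtain an object accessible to strong convergence, (ii)~control the tail error uniformly in $N$ using only analytic properties of the free resolvent on $\mathbb{H}^2$, and (iii)~pass from norm convergence of $Q_{z,L}(\boldsymbol{U}^N)$ to a genuine spectral gap statement by excluding spectrum rather than just controlling a single resolvent. The closed-surface setting precludes the free-group reduction used in \cite{HM23}---every subgroup of a free group is free---which is precisely why Theorem~\ref{thm:mpv}, with its genuinely non-free input, is indispensable here.
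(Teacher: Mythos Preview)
Your proposal is correct and follows precisely the approach the paper indicates: combine Theorem~\ref{thm:mpv} (strong convergence of uniform permutation representations of $\boldsymbol{\Gamma}_g$) with the Hide--Magee resolvent reduction of the Laplacian spectral gap to a strong convergence statement, together with the standard counting argument equating uniform covers with uniform homomorphisms. The paper, being a survey, does not give a proof beyond citing \cite{MPV25} and pointing to this combination of ingredients; your sketch matches that outline and correctly identifies the resolvent truncation and tail control as the analytic work to be done.
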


We mention in this context a closely related question: does a typical 
hyperbolic surface of genus $g$ satisfy the conclusion of 
Theorem~\ref{thm:hm} as $g\to\infty$? Theorem~\ref{thm:mpv2} does not 
answer this question, as most surfaces of genus $g$ do not cover a surface 
of smaller genus. The natural notion of ``typical'' in this setting is 
with respect to the Weil-Petersson measure on the moduli space of surfaces 
of genus $g$, whose study was pioneered by Mirzakhani~\cite{Mir13}. In an 
impressive tour-de-force, Anantharaman and Monk \cite{AM25} 
provided an affirmative answer to this question using methods inspired by 
the original proof of Friedman's theorem. While it does not appear that 
this question can be reduced to a strong convergence problem, Hide, 
Macera, and Thomas \cite{HMT25b} gave a new proof of this result by 
directly applying the polynomial method (Section~\ref{sec:poly}) to this 
problem. This approach notably provides a polynomial convergence rate, 
which was expected in view of deep conjectures on quantum chaos. A 
polynomial rate in Theorem \ref{thm:mpv2} was achieved by the same authors 
in \cite{HMT25}.

In contrast to the Weil-Petersson model, the random cover model remains 
meaningful beyond the setting of hyperbolic surfaces. For example, 
Hide--Moy--Naud \cite{HMN25,Moy25} establish results analogous to 
Theorems~\ref{thm:hm}~and~\ref{thm:mpv2} for surfaces with variable 
negative curvature. Analogous questions for hyperbolic manifolds in higher 
dimension remain open. Both the universal upper bound as in Lemma 
\ref{lem:huber} and the methods of Hide--Magee extend to this 
setting (see, e.g., \cite{Che75,BMP25}); what is missing is that, at 
present, it is not known whether the fundamental group of any such 
manifold admits a strongly convergent sequence of permutation 
representations.

\subsubsection{Minimal surfaces.}

We now discuss a very different application of strong convergence to the 
theory of minimal surfaces. Recall that a surface $Y$ in a Riemannian 
manifold $M$ is called a minimal surface if it is a critical point of the 
area functional under compact perturbations. A basic question in this 
context is how the geometry of $M$ constrains the minimal surfaces that 
sit inside it. For example, it was shown by Bryant~\cite{Bry85} that the 
Euclidean unit sphere $S^N$, which has constant positive curvature, cannot 
contain a minimal surface of constant negative curvature. The following
surprising result of Song~\cite{Son24} presents a very different picture
than what the result of Bryant might lead one to expect.

\begin{theorem}[Song]
\label{thm:song}
There exists a sequence of closed minimal surfaces $Y^N$ 
in Euclidean unit spheres $S^{D_N}$ 
such that the Gaussian curvature $K^N$ of $Y^N$ satisfies
$$
	\lim_{N\to\infty} \frac{1}{\mathrm{area}(Y^N)}
	\int_{Y^N} |K^N+8| = 0.
$$
\end{theorem}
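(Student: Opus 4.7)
The plan is to construct $Y^N$ from the Laplace eigenfunctions of a random hyperbolic cover via a Takahashi-type map, and to read off the curvature value $-8$ from the scaling of eigenfunctions with eigenvalue near $\tfrac14$. Fix a closed orientable hyperbolic surface $X$ of constant Gaussian curvature $-1$, and let $X^N$ be a uniform random $N$-fold cover of $X$, with area $A_N=N\,\mathrm{area}(X)$. By Theorem~\ref{thm:mpv2}, with probability $\to 1$ every new eigenvalue of $\Delta_{X^N}$ exceeds $\tfrac14-o(1)$. Combined with Weyl's law for hyperbolic surfaces---which ensures that the number of eigenvalues of $\Delta_{X^N}$ in $[\tfrac14,\tfrac14+\epsilon]$ is of order $\epsilon A_N$---one can choose $D_N\to\infty$ with $D_N=o(A_N)$ and a sequence $\epsilon_N\downarrow 0$ together with an $L^2$-orthonormal system $f_1,\ldots,f_{D_N}$ of eigenfunctions of $\Delta_{X^N}$ whose eigenvalues all lie in $[\tfrac14-o(1),\tfrac14+\epsilon_N]$. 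Define the rescaled Takahashi map
$$
	\Phi_N := \sqrt{A_N/D_N}\,(f_1,\ldots,f_{D_N}) \colon X^N\to\mathbb{R}^{D_N}.
$$

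Granting a spectral equidistribution statement for this near-threshold block, namely that $\sum_i (\Phi_N^i)^2\to 1$ and $\sum_i d\Phi_N^i\otimes d\Phi_N^i\to\tfrac18\,g_{X^N}$ pointwise (or in $L^1$) on $X^N$, the map $\Phi_N$ takes values asymptotically on the unit sphere $S^{D_N-1}$, and its induced metric $g^*=\Phi_N^*g_{\mathrm{Eucl}}$ is asymptotic to $\tfrac18 g_{X^N}$. The induced Laplacian then satisfies $\Delta_{g^*}\Phi_N=8\Delta_{g_{X^N}}\Phi_N\approx 2\Phi_N$, so by Takahashi's criterion (a $2$-manifold in the unit sphere is minimally isometrically immersed iff $\Delta\Phi=2\Phi$) the map $\Phi_N$ is asymptotically a minimal isometric immersion. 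The induced Gaussian curvature is then $K_{g^*}=K_{g_{X^N}}/(1/8)=-8$, so the desired $L^1$ concentration at $-8$ holds for $\Phi_N(X^N)$. A perturbative step---minimizing area among immersions $C^1$-close to $\Phi_N$, or running a short-time mean curvature flow---then converts the near-minimal $\Phi_N(X^N)$ into a genuine closed minimal surface $Y^N\subset S^{D_N}$ whose intrinsic metric is a small perturbation of $g^*$, preserving the curvature concentration.

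The principal obstacle is the spectral equidistribution step. Theorem~\ref{thm:mpv2} controls the spectrum of $\Delta_{X^N}$ but gives no pointwise information about individual eigenfunctions. What is needed is a form of quantum ergodicity at the bottom of the spectrum yielding $\sum_i f_i^2\approx D_N/A_N$ and $\sum_i df_i\otimes df_i\approx (D_N/(8A_N))\,g_{X^N}$ uniformly enough to feed into the perturbation argument. I would attack this by combining (i) the resolvent approximations pioneered by Hide--Magee, which express spectral projectors of $\Delta_{X^N}$ as noncommutative $*$-polynomials in the random permutations defining the cover, so that their matrix entries are accessible through strong convergence; (ii) Benjamini--Schramm convergence $X^N\to\mathbb{H}^2$, which pins down the spatial density of the near-threshold spectrum on the universal cover; and (iii) variance estimates to promote expectation-level equidistribution to probabilistic pointwise or $L^1$ control. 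Executing this step with sufficient uniformity to absorb the subsequent perturbative step is where the bulk of the work lies; the scaling computation relating eigenvalue $\tfrac14$ and curvature $-8$ is the easy input that makes the theorem possible in the first place.
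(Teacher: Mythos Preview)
Your approach is genuinely different from Song's, and it has real gaps. Song does not use hyperbolic covers, Laplace eigenfunctions, or Takahashi's criterion at all. Instead he fixes random permutation representations $\rho_N:\mathbf{F}_2\to\mathrm{U}(N)$ of the \emph{free} group (via Theorem~\ref{thm:bc}), and for each $N$ constructs a genuine closed minimal surface $Y^N\subset S^{2N-1}$ by minimizing a Dirichlet energy among $\rho_N$-equivariant maps. A compactness argument produces a subsequential limit $Y^\infty$ in the unit sphere $S^\infty$ of an infinite-dimensional Hilbert space, equivariant under some limiting representation $\rho_\infty$. The heart of the proof is a \emph{rigidity theorem}: any $\rho_\infty$-equivariant minimal surface in $S^\infty$ with $\rho_\infty$ weakly equivalent to the regular representation $\lambda$ of $\mathbf{F}_2$ must have constant curvature $-8$. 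Strong convergence of $\rho_N$ is precisely what forces $\rho_\infty$ to be weakly equivalent to $\lambda$. So the minimal surfaces are exact from the outset, and the curvature statement follows by compactness and rigidity rather than by perturbation.

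Your proposal, by contrast, rests on two steps that are not under control. First, the equidistribution $\sum_i(\Phi_N^i)^2\to1$ and $\sum_i d\Phi_N^i\otimes d\Phi_N^i\to\tfrac18 g$ for a block of eigenfunctions near the threshold $\tfrac14$ is a strong quantum-ergodicity statement at the \emph{bottom} of the spectrum; Theorem~\ref{thm:mpv2} gives no eigenfunction information, Benjamini--Schramm convergence gives only weak local averages, and the Hide--Magee resolvent machinery controls spectral projectors in operator norm, not pointwise behavior of kernels. Second, even granting an approximate Takahashi map, the step ``perturb the near-minimal image to a genuine minimal surface'' is not a soft argument: minimality is a second-order elliptic PDE, your approximation is only in $L^1$ in the curvature, and neither short-time mean curvature flow nor local area minimization is guaranteed to stay near $\Phi_N(X^N)$ or to preserve closedness without substantial additional estimates. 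Song's variational construction sidesteps both issues by producing exact minimal surfaces directly and pushing the analysis into the limiting rigidity problem, where the representation-theoretic input (strong convergence) does the work.
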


In other words, high-dimensional spheres contain minimal surfaces that 
have nearly constant curvature $-8$. These unusual surfaces are 
obtained by a random construction that we briefly sketch.

The approach is based on a classical variational method that constructs 
minimal surfaces by minimizing the Dirichlet energy \cite[Chapter 
4]{Moo17}. By imposing a symmetry constraint in the variational problem, 
one can construct closed minimal surfaces $Y^N$ in $S^{2N-1}$ that are 
$\rho_N$-equivariant, where $\rho_N:\mathbf{F}_2\to \mathrm{U}(N)$ is a 
unitary representation with finite range and where we identify $S^{2N-1}$ 
with the unit sphere in $\mathbb{C}^N$. By a compactness argument, a 
subsequence of these surfaces converges to a minimal surface $Y^\infty$ in 
the unit sphere $S^\infty$ of an infinite-dimensional Hilbert space $H$ 
that is equivariant with respect to some unitary representation 
$\rho_\infty:\mathbf{F}_2\to \mathrm{U}(H)$.

In the absence of further assumptions, it is not clear what this limiting 
surface might look like. However, \cite{Son24} proves a remarkable 
rigidity property: any $\rho_\infty$-equivariant   
minimal surface in $S^\infty$ such that $\rho_\infty$ is weakly equivalent
to the regular representation $\lambda$ of $\mathbf{F}_2$
in the sense that (cf.\ \cite[Appendix F]{BHV08})
$$
        \|\rho_\infty(x)\| = \|\lambda(x)\|
        \quad\text{for all}\quad x\in\mathbb{C}[\mathbf{F}_2],
$$
has constant curvature $-8$ (note that such surfaces cannot exist in
finite dimension due to the result of Bryant). Thus to complete the proof,
it suffices to choose the finite dimensional
representations $\rho_N$ such that
$$
        \lim_{N\to\infty}\|\rho_N(x)\| = \|\lambda(x)\|,
$$
which is nothing other than strong convergence (see Section~\ref{sec:nonfree}).
Theorem \ref{thm:song} follows by choosing
$\rho_N$ to be the random permutation representations of $\mathbf{F}_2$ 
that converge strongly by Theorem \ref{thm:bc}.

\subsection{Operator algebras.}

\subsubsection{$\boldsymbol{\mathrm{Ext}(C^*_{\rm red}(\mathbf{F}_2))}$ is 
not a group.}
\label{sec:ext}

For our purposes, a \emph{$C^*$-algebra} may be defined as a $*$-algebra 
of bounded operators on a Hilbert space that is closed under the operator 
norm. For example, for any finitely generated group $\mathbf{G}$ with 
generators $g_1,\ldots,g_r$ and regular representation $\lambda$, the 
norm-closure of all $*$-polynomials in $\lambda(g_1),\ldots,\lambda(g_r)$ 
defines a $C^*$-algebra $C^*_{\rm red}(\mathbf{G})$, called the reduced 
$C^*$-algebra of $\mathbf{G}$.

That a family of bounded operators $x_1,\ldots,x_r$ admits a strongly 
convergent (random) matrix model as in Definition \ref{defn:strong} 
implies, in a particular sense, that the $C^*$-algebra generated by 
$x_1,\ldots,x_r$ admits a sequence of finite-dimensional approximations. 
This places strong constraints on the structure of such a $C^*$-algebra; 
for example, it implies that it is an MF-algebra in the sense of Blackadar 
and Kirchberg \cite{BK97}. The initial development of the strong 
convergence phenomenon was strongly motivated by open problems in the 
theory of $C^*$-algebras. We briefly sketch one important problem of this 
kind, whose resolution was a major aim of the original work on strong 
convergence due to Haagerup and Thorbj{\o}rnsen \cite{HT05}.

To motivate this problem, recall that the spectrum of a bounded 
self-adjoint operator $X$ on an infinite-dimensional separable Hilbert 
space $H$ can be decomposed into the discrete spectrum and the essential 
spectrum. The Weyl-von Neumann theorem characterizes the essential 
spectrum as the part of the spectrum that is invariant under compact 
perturbations of $X$; in other words, it is the spectrum of the image of 
$X\in B(H)$ in the Calkin algebra $B(H)/K(H)$, where $K(H)$ denotes 
the ideal of compact operators in $B(H)$.

Motivated by analogous questions for non-self-adjoint operators, Brown, 
Douglas, and Fillmore \cite{BDF73} proposed to 
investigate properties of $C^*$-algebras $\mathcal{A}$ up to compact 
perturbations. A central role in this program is played by 
$\mathrm{Ext}(\mathcal{A})$, which is defined as the set of 
$*$-homomorphisms $\pi:\mathcal{A}\to B(H)/K(H)$ modulo unitary 
conjugation. The invariant $\mathrm{Ext}(\mathcal{A})$ may naturally be 
viewed as a semigroup with respect to the addition 
$(\pi_1,\pi_2)\mapsto\pi_1\oplus\pi_2$. Rather surprisingly, there are 
many $C^*$-algebras in which every element of $\mathrm{Ext}(\mathcal{A})$ 
has an inverse, so that it is in fact a group. This suggests that perhaps 
$\mathrm{Ext}(\mathcal{A})$ might \emph{always} be a group, but this is 
not the case: a counterexample was provided by Anderson \cite{And78}. It 
remained unclear, however, how to understand such questions in specific 
situations. In particular, the following result 
\cite{HT05} had long remained open.

\begin{theorem}[Haagerup--Thorbj{\o}rnsen]
\label{thm:ext}
$\mathrm{Ext}(C^*_{\rm red}(\mathbf{F}_2))$ is not a group.
\end{theorem}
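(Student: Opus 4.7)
The plan is to leverage strong convergence to construct a concrete essential extension of $C^*_{\rm red}(\mathbf{F}_2)$ by the compacts, and then to show that its class in $\mathrm{Ext}$ has no inverse by contradicting Rosenberg's theorem that $C^*_{\rm red}(G)$ is quasidiagonal only when $G$ is amenable.

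First, I would apply Theorem \ref{thm:cm} (Theorem \ref{thm:ht} with the functional calculus $u=\Psi(s)$ works equally well, and is the route taken in \cite{HT05}) to obtain i.i.d.\ Haar unitaries $U_1^N, U_2^N \in \mathrm{U}(N)$ that converge strongly to free Haar unitaries $u_1, u_2$, which generate $C^*_{\rm red}(\mathbf{F}_2)$. The definition of strong convergence only gives convergence in probability for each fixed $*$-polynomial, but a diagonal argument over a countable dense family of $*$-polynomials, combined with a Borel--Cantelli extraction, yields a \emph{deterministic} realization and subsequence along which $\|P(U_1^N, U_2^N)\| \to \|P(u_1, u_2)\|$ for \emph{every} $*$-polynomial $P$. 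Setting $H = \bigoplus_{N} \mathbb{C}^N$ and $\tilde U_k = \bigoplus_N U_k^N \in B(H)$, the essential norm of a block-diagonal operator equals the $\limsup$ of its block norms, so the assignment $u_k \mapsto \tilde U_k + K(H)$ extends to a unital \emph{isometric} $*$-homomorphism $\pi: C^*_{\rm red}(\mathbf{F}_2) \to B(H)/K(H)$, thereby defining a class $[\pi] \in \mathrm{Ext}(C^*_{\rm red}(\mathbf{F}_2))$.

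The second step is to show $[\pi]$ has no inverse. Suppose, for contradiction, that $[\pi]$ were invertible, i.e., there is $[\pi']$ with $[\pi]\oplus[\pi']=0$ in $\mathrm{Ext}$. By Voiculescu's characterization of trivial and invertible essential extensions, an essential extension is invertible precisely when it admits a quasidiagonal-type approximation: for every finite $F\subset C^*_{\rm red}(\mathbf{F}_2)$ and every $\varepsilon>0$ there must exist a finite-rank projection $P$ on $H$ and a u.c.p.\ lift $\tilde\pi$ of $\pi$ with $\|[P,\tilde\pi(a)]\|<\varepsilon$ and $\|P\tilde\pi(a)P\|\ge\|\pi(a)\|-\varepsilon$ for all $a\in F$. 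Applied to $F=\{u_1,u_1^*,u_2,u_2^*\}$, this property transported back through the isometry $\pi$ would produce a quasidiagonal faithful $*$-representation of $C^*_{\rm red}(\mathbf{F}_2)$, exhibiting $C^*_{\rm red}(\mathbf{F}_2)$ as a quasidiagonal $C^*$-algebra in the sense of Halmos and Voiculescu. This directly contradicts Rosenberg's theorem, since $\mathbf{F}_2$ is non-amenable. Hence $[\pi]$ has no inverse and $\mathrm{Ext}(C^*_{\rm red}(\mathbf{F}_2))$ is not a group.

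The main obstacle is the bridging step in the second paragraph: strong convergence by itself only yields the MF structure (norm-approximation by finite-dimensional matrix algebras modulo null sequences), whereas quasidiagonality demands the strictly stronger property that finite-rank projections \emph{almost commute} with the generators. The block-diagonal embedding $\pi$ provides an abundance of cutoff projections $\bigoplus_{n\le N}I_n$, but these do not asymptotically commute with $\tilde U_1,\tilde U_2$, so the MF structure alone does not force quasidiagonality. Voiculescu's invertibility criterion is precisely what would upgrade MF to quasidiagonal; the non-amenability of $\mathbf{F}_2$ obstructs this upgrade, and the delicate work is to make Voiculescu's criterion interact cleanly with the explicit block-diagonal model produced by Theorem \ref{thm:cm}.
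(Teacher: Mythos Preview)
Your overall plan matches the paper's sketch exactly: form the block-diagonal unitaries $\tilde U_k=\bigoplus_N U_k^N$, use strong convergence to get a faithful $*$-homomorphism $\pi:C^*_{\rm red}(\mathbf{F}_2)\to B(H)/K(H)$, and argue that $[\pi]$ cannot be invertible because that would force $C^*_{\rm red}(\mathbf{F}_2)$ to be quasidiagonal, contradicting Rosenberg. This is precisely Voiculescu's observation that the paper cites, and the random-matrix input is Theorem~\ref{thm:ht} (or Theorem~\ref{thm:cm}) as you say.

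However, your second paragraph misidentifies the mechanism. The cutoff projections $P_M=\bigoplus_{N\le M}I_N$ \emph{do} commute with $\tilde U_1,\tilde U_2$---exactly, not merely asymptotically---since everything is block-diagonal. The reason this does not already exhibit $C^*_{\rm red}(\mathbf{F}_2)$ as quasidiagonal is different: the assignment $u_k\mapsto\tilde U_k$ is \emph{not} a bounded map on $C^*_{\rm red}(\mathbf{F}_2)$ (only on $\mathbb{C}[\mathbf{F}_2]$ or the full $C^*(\mathbf{F}_2)$), so there is no $*$-homomorphism from $C^*_{\rm red}(\mathbf{F}_2)$ into $B(H)$ to which the $P_M$ can be applied. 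Relatedly, the ``quasidiagonal-type approximation'' you state is not the characterization of invertibility in $\mathrm{Ext}$; the correct statement (Arveson, via Choi--Effros and Voiculescu's theorem on trivial extensions) is that $[\pi]$ is invertible if and only if $\pi$ admits a unital completely positive lift $\phi:C^*_{\rm red}(\mathbf{F}_2)\to B(H)$. The actual bridge then runs as follows: given such a lift $\phi$, each $\phi(u_k)-\tilde U_k$ is compact, hence $\|[P_M,\phi(u_k)]\|=\|[P_M,\phi(u_k)-\tilde U_k]\|\to 0$; together with $\|P_M\phi(a)P_M\|\to\|\pi(a)\|=\|a\|$ this yields asymptotically multiplicative, asymptotically isometric u.c.p.\ maps into matrix algebras, i.e.\ quasidiagonality of $C^*_{\rm red}(\mathbf{F}_2)$, and the contradiction with Rosenberg follows. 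So the commuting projections are the \emph{asset}, not the obstacle; the missing piece that invertibility supplies is the u.c.p.\ lift defined on $C^*_{\rm red}(\mathbf{F}_2)$ itself.
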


The key observation behind Theorem \ref{thm:ext}, due to Voiculescu 
\cite[\S 5.14]{Voi93} (see also \cite[Remark 8.6]{HT05}), is that the 
existence of a strongly convergent sequence of finite-dimensional unitary 
representations $\rho_N$ of $\mathbf{F}_2$ presents an obstruction to 
$\mathrm{Ext}(C^*_{\rm red}(\mathbf{F}_2))$ being a group, since it can be 
shown that the $*$-homomorphism defined by $\bigoplus_{N=1}^\infty\rho_N$ 
is not invertible. This led Voiculescu to ask whether such a strongly 
convergent sequence of representations exists. This was established 
for the first time by Haagerup and Thorbj{\o}rnsen, settling the problem.

Strong convergence has subsequently been applied to various other problems 
in the theory of $C^*$-algebras. The original paper of Haagerup and 
Thorbj{\o}rnsen \cite{HT05} also settles a question that arises from the 
work of Junge and Pisier on tensor products of $B(H)$. Haagerup, Schultz, 
and Thorbj{\o}rnsen \cite{HST06} used strong convergence to give a random 
matrix theory proof of the fact that $C^*_{\rm red}(\mathbf{F}_2)$ has no 
nontrivial projections, which had previously been established using 
K-theory. Other applications include work of Voiculescu on topological 
free entropy \cite{Voi02}, as well as various applications of the fact that 
$C^*_{\rm red}(\mathbf{F}_2)$ is an MF-algebra.

\subsubsection{The Peterson-Thom conjecture.}
\label{sec:ptconj}

A \emph{von Neumann algebra} is a $*$-algebra of bounded
operators on a Hilbert space that is closed in the strong operator topology.
For example, for any finitely generated group $\mathbf{G}$ with generators
$g_1,\ldots,g_r$ and regular representation $\lambda$, the closure of
all $*$-polynomials in $\lambda(g_1),\ldots,\lambda(g_r)$ with respect to
the strong operator topology defines a von Neumann algebra $L(\mathbf{G})$,
called the group von Neumann algebra of $\mathbf{G}$. Since the
strong operator topology is much weaker than the norm topology, the
von Neumann algebra $L(\mathbf{G})$ is much bigger than its $C^*$-counterpart
$C^*_{\rm red}(\mathbf{G})$ and is in some ways more poorly understood.
For example, it is not even known whether or not $L(\mathbf{F}_r)$ and
$L(\mathbf{F}_s)$ are isomorphic for $r\ne s$.

One way to gain insight into an operator algebra is to investigate the 
structure of the subalgebras that sit inside it. The following theorem of 
Hayes \cite{Hay22} in this spirit settled a long-standing conjecture about 
amenable von Neumann subalgebras of $L(\mathbf{F}_r)$ due to Peterson and 
Thom \cite{PT11}.

\begin{theorem}[Hayes]
\label{thm:hayes}
Any diffuse amenable von Neumann subalgebra of $L(\mathbf{F}_r)$ is
contained in a unique maximal amenable von Neumann subalgebra of
$L(\mathbf{F}_r)$.
\end{theorem}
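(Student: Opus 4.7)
The plan is to follow Hayes's strategy of reducing the uniqueness claim to a strong convergence problem, with Jung's $1$-bounded entropy $h(\,\cdot\,|\,L(\mathbf{F}_r))$ serving as the bridge. The guiding principle is that if $P_1,P_2\subset L(\mathbf{F}_r)$ were two distinct maximal amenable subalgebras both containing a common diffuse amenable $A$, then $\langle P_1,P_2\rangle$ would itself have to be amenable, contradicting maximality unless $P_1=P_2$. Forcing $\langle P_1,P_2\rangle$ to be amenable amounts to showing that the combined microstate space of $P_1\cup P_2$ is small.

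First I would set up the rigidity half of the argument. In the strongly solid factor $L(\mathbf{F}_r)$ of Ozawa-Popa, if $N_1,N_2$ are amenable von Neumann subalgebras whose intersection is diffuse, and if the joint $1$-bounded entropy satisfies $h(N_1\vee N_2\,|\,L(\mathbf{F}_r))\le 0$, then Hayes's rigidity machinery forces $\langle N_1,N_2\rangle$ itself to be amenable. Given this criterion, it suffices to prove the entropy bound whenever two amenable subalgebras share a diffuse intersection.

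The entropy bound is in turn reduced to a random matrix statement. Microstates for the canonical generators of $L(\mathbf{F}_r)$ are modelled, via Theorem~\ref{thm:ht}, by independent GUE matrices $\boldsymbol{G}^N$. Generators for $N_1$ and $N_2$ can be encoded by $*$-polynomials in $\boldsymbol{G}^N$ together with an auxiliary family of deterministic matrix coefficients whose dimension is allowed to grow with $N$ and that parametrize the amenable structure. A Jung-style $\varepsilon$-net computation then yields $h(N_1\vee N_2\,|\,L(\mathbf{F}_r))\le 0$ \emph{provided} that norms of arbitrary $*$-polynomials in $\boldsymbol{G}^N$ and the deterministic coefficients converge to the corresponding norms in the free product of a semicircular family with the appropriate matrix algebra. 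This is a joint strong convergence statement of the type mentioned at the end of Section~\ref{sec:saf}.

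The main obstacle is this strong convergence input. Because the deterministic coefficients have dimension tending to infinity, the relevant linearized operators live in matrix amplifications of growing size, so the statement lies beyond Theorem~\ref{thm:ht} as formulated. This was, in fact, the Peterson-Thom conjecture recast as a random matrix problem, and its resolution required the analytic methods surveyed in Section~\ref{sec:approaches}, in particular operator-valued subordination and interpolation arguments in the spirit of \cite{CGP22}. Once this joint strong convergence is available, the Jung-Hayes entropy inequalities combine with the strong solidity of $L(\mathbf{F}_r)$ to upgrade amenability of $N_1$ and $N_2$ to amenability of $\langle N_1,N_2\rangle$, from which the uniqueness of the maximal amenable container follows at once.
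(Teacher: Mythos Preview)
Your outline captures the broad architecture correctly—Hayes's reduction via $1$-bounded entropy to a strong convergence problem, followed by the subsequent resolution of that problem—and in that sense it matches the paper's account. However, several of the specifics are off.

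First, the random matrix problem that Hayes isolates is not the one you describe. The reduction is to strong convergence of the \emph{tensor product} model: one must show that the family
\[
G_1^N\otimes\mathbf{1},\ \ldots,\ G_r^N\otimes\mathbf{1},\ \mathbf{1}\otimes H_1^N,\ \ldots,\ \mathbf{1}\otimes H_r^N
\]
with $G_i^N,H_j^N$ i.i.d.\ GUE converges strongly to $s_1\otimes\mathbf{1},\ldots,s_r\otimes\mathbf{1},\mathbf{1}\otimes s_1,\ldots,\mathbf{1}\otimes s_r$ in the minimal tensor product. Your formulation in terms of GUE with deterministic matrix coefficients of growing dimension is really Pisier's question; the paper explains that an affirmative answer to Pisier's question with $D_N=N$ \emph{implies} Hayes's statement (by conditioning on $\boldsymbol{H}^N$ and using exactness of the semicircular $C^*$-algebra), but Hayes's own formulation is the tensor one. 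In particular, the auxiliary matrix coefficients do not ``parametrize the amenable structure'' of the subalgebras $N_1,N_2$: those subalgebras are arbitrary, and the random matrix input is a global property of $L(\mathbf{F}_r)$, not an encoding of any particular subalgebra.

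Second, strong solidity is not the mechanism here, and the implication ``$h(N_1\vee N_2\,|\,L(\mathbf{F}_r))\le 0$ forces $\langle N_1,N_2\rangle$ amenable'' is not how the argument runs (vanishing $1$-bounded entropy does not by itself imply amenability). The paper describes Hayes's result as an entropic characterization of amenable subalgebras of $L(\mathbf{F}_r)$, with the Peterson--Thom statement as a corollary; the Ozawa--Popa strong solidity theorem is a separate structural fact that does not suffice for this purpose. Finally, the strong convergence input was not settled by the methods of \cite{CGP22}; it was established afterwards by several independent approaches \cite{BC22,BC24,MdlS24,Par24,CGV25}.
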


Hayes actually proves a stronger result that provides an 
entropic characterization of amenable subalgebras of $L(\mathbf{F}_r)$, of 
which Theorem \ref{thm:hayes} is a corollary. The methods of \cite{Hay22} 
have subsequently led to various related developments in the theory of von 
Neumann algebras \cite{HJK25}. We omit further discussion of the precise 
meaning and significance of the statement of Theorem \ref{thm:hayes}, 
which is outside the scope of this survey.

The central insight of Hayes was that the Peterson-Thom conjecture can be 
reduced to a certain question of strong convergence of tensor products
of GUE matrices: the main result of 
\cite{Hay22} states that the conclusion of Theorem \ref{thm:hayes} would 
follow if it can be shown that the family of $N^2$-dimensional random 
matrices
$$
	G_1^N\otimes\mathbf{1},~
	\ldots,~
	G_r^N\otimes\mathbf{1},~
	\mathbf{1}\otimes H_1^N,~\ldots,~
	\mathbf{1}\otimes H_r^N
$$
converges strongly to
$$
	s_1\otimes\mathbf{1},~\ldots,~s_r\otimes\mathbf{1},~
	\mathbf{1}\otimes s_1,~\ldots,~\mathbf{1}\otimes s_r,
$$
where $G_1^N,\ldots,G_r^N,H_1^N,\ldots,H_r^N$ are i.i.d.\ GUE matrices,
$s_1,\ldots,s_r$ is a free semicircular family,
and all tensor produces are minimal. This strong convergence 
problem was beyond the reach of the methods that were 
available at the time that \cite{Hay22} was written, and Theorem 
\ref{thm:hayes} therefore appears in \cite{Hay22} as a conditional 
statement. Hayes' work drew much attention on the random matrix side, and 
the strong convergence result that is needed as input to \cite{Hay22} was 
subsequently proved by several different approaches 
\cite{BC22,BC24,MdlS24,Par24,CGV25}.

The above strong convergence problem is closely connected with another 
question that arises from Pisier's work on subexponential operator spaces 
\cite{Pis14}. While Definition \ref{defn:strong} defines strong 
convergence by requiring that $\|P(\boldsymbol{X}^N)\|\to 
\|P(\boldsymbol{x})\|$ for all $*$-polynomials $P\in\mathbb{C}^*\langle 
x_1,\ldots,x_r\rangle$ with scalar coefficients, it is an elementary fact 
that this implies the same property also for $*$-polynomials 
$P\in\mathrm{M}_D(\mathbb{C})\otimes \mathbb{C}^*\langle 
x_1,\ldots,x_r\rangle$ with matrix coefficients; see, e.g., 
\cite[Lemma~2.16]{vH25cdm}. Pisier asked whether it is still the case
that
$$
	\|P_N(\boldsymbol{X}^N)\| =
	(1+o(1))\|P_N(\boldsymbol{x})\|
$$
when $P_N\in\mathrm{M}_{D_N}(\mathbb{C})\otimes \mathbb{C}^*\langle
x_1,\ldots,x_r\rangle$ are $*$-polynomials with matrix coefficients of growing
dimension, and if so how rapidly $D_N$ can grow with $N$.
The connection between this question and Hayes' strong convergence problem
is that as $\boldsymbol{G}^N=(G_1^N,\ldots,G_r^N)$ and
$\boldsymbol{H}^N=(H_1^N,\ldots,H_r^N)$ are independent, we may interpret
$$
	P_N(\boldsymbol{G}^N) = 
	P(\boldsymbol{G}^N\otimes\mathbf{1},~\mathbf{1}\otimes
	\boldsymbol{H}^N)
$$
as a $*$-polynomial of $\boldsymbol{G}^N$ with 
matrix coefficients of dimension $D_N=N$ by conditioning on
$\boldsymbol{H}^N$. This observation suffices (by using an additional
property of the $C^*$-algebra generated by $s_1,\ldots,s_r$, viz.\ exactness)
to show that Hayes' question has an affirmative answer if Pisier's question
has an affirmative answer with $D_N=N$. However, it was noted by Pisier
\cite[(0.9)]{Pis14} that the methods of Haagerup and Thorbj{\o}rnsen
can establish such a property only for $D_N=o(N^{1/4})$, which does not suffice
for the purpose of Theorem \ref{thm:hayes}.

It is now known that Pisier's question has an affirmative answer for 
much higher-dimensional matrix coefficients 
\cite{BC24,MdlS24,Par24,CGV25}, which amply suffices for proving strong 
convergence of Hayes' model. The best result to date, due to Chen, 
Garza-Vargas, and the author \cite{CGV25}, shows that strong convergence 
remains valid whenever $D_N=e^{o(N)}$. On the other hand, strong 
convergence is known to fail when $D_N\ge e^{CN^2}$. What happens in 
between these two regimes remains a tantalizing question.

The above discussion illustrates that it is sometimes possible to build 
more complicated strongly convergent models (such as Hayes' model) out of 
simpler building blocks (strong convergence with matrix coefficients). 
More sophisticated constructions in this spirit were used in \cite{MT23} 
and in \cite[\S 9.4]{CGV25} to obtain strongly convergent models where 
GUE matrices act on overlapping factors of a tensor product. We finally 
note that strong convergence of tensor products also arises naturally in 
other operator algebraic problems, see, e.g., \cite[Theorem 4.1]{Oza23}.

\subsection{Further applications.}

\subsubsection{Cutoff of random walks.}

As was noted in Section~\ref{sec:lifts}, the spectral gap of a graph 
determines the rate at which a simple random walk on that graph converges 
to equilibrium. However, some random walks are known to exhibit a more 
subtle and striking phenomenon: their total variation distance to 
equilibrium drops abruptly from nearly one to nearly zero on a time scale 
much shorter than the mixing time itself. This property, known as the 
cutoff phenomenon, has attracted much attention in probability theory. A 
general understanding of which random walks exhibit a cutoff remains far 
from complete \cite{Sal25}.

It was shown by Lubetzky and Peres \cite{LP16} that simple random walks on 
a sequence of $d$-regular graphs that have an asymptotically optimal 
spectral gap (in the sense that their nontrivial eigenvalues are 
asymptotically bounded by $2\sqrt{d-1}$, cf.\ Lemma \ref{lem:alonbop}) 
always exhibit a cutoff. It is natural to ask whether a similar phenomenon 
holds for non-regular graphs or non-simple random walks, but it is not 
even entirely clear how to formulate such a result precisely. In 
\cite{BL22}, Bordenave and Lacoin provide an affirmative answer to this 
question for sequences of graphs that are defined by strongly convergent 
permutation representations of a discrete group. For example, their 
results show that random walks on random lifts of any base graph 
(cf.\ Theorem \ref{thm:lift}) exhibit a cutoff.

\subsubsection{Quantum information theory.}

Random matrices play an important role in quantum information theory. We 
briefly list a few applications of strong convergence to this area, 
without providing any details. Strong convergence has been used in various 
studies of additivity violation of the minimum output entropy of quantum 
channels \cite{BCN12,BCN16,Col18,FHS22}. The intrinsic freeness theory 
described in Section~\ref{sec:intrinsic} has been used to construct a 
large class of quantum expanders \cite{LY23,Lan24} and to provide lower 
bounds for quantum tomography \cite{ADLY25}. We also note that models of
random matrices that act on overlapping factors of a tensor product, which
were previously discussed in Section~\ref{sec:nonfree} in the context of 
Theorem \ref{thm:mt} and in Section~\ref{sec:ptconj}, appeared 
independently in the physics literature as generic models of 
quantum spin systems that interact through an arbitrary dependency graph
\cite{ML19,CY24}.

\subsubsection{Applied mathematics.}
\label{sec:applmath}

The intrinsic freeness theory of Section~\ref{sec:intrinsic} is especially 
useful in applied mathematics, since random matrices with a messy 
structure arise routinely in applications. The papers 
\cite{BBV21,BCSV23,BvH24,BLNV25} discuss a diverse range of applications, 
and many more have subsequently appeared in the literature. For sake of 
illustration, let us briefly sketch one example of such an application.

Fix $\lambda>0$, a vector $x\in \{-1,1\}^n$, and i.i.d.\ standard gaussian 
variables $(Z_S)_{S\subseteq[n]:|S|=p}$. We view the latter variables
as the entries of a symmetric tensor $Z$ of order $p$ that is defined
by $Z_{i_1,\ldots,i_p} = Z_{\{i_1,\ldots,i_p\}}$. We now consider a
signal plus noise model of the form $Y=\lambda x^{\otimes p}+Z$,
whose entries are defined by
$$
	Y_S = \lambda x_S + Z_S
$$
where $x_S=\prod_{i\in S}x_i$. The tensor PCA problem asks under what 
condition on the signal strength $\lambda$ it is 
(algorithmically) possible to detect the presence of the signal in the 
noisy observation $Y$ \cite{MR14,WEM19}.
 
The following spectral method, which was motivated by ideas of statistical
physics, was proposed in \cite{WEM19}. Let $p\ge 4$ be even and
$\ell\in[p/2,n-p/2]$. We define the ${n\choose\ell}\times
{n\choose\ell}$ Kikuchi matrix 
$X=(X_{S,T})_{S,T\subseteq[n]:|S|=|T|=\ell}$
as
$$
	X_{S,T} = \begin{cases}
	Y_{S\triangle T} & \text{when }|S\triangle T|=p,\\
	0 & \text{otherwise},
	\end{cases}
$$
where $\triangle$ denotes the symmetric difference. The
signal is detected by the presence of an outlier eigenvalue in the 
spectrum of $X$. The question is how large $\lambda$ must be for this 
outlier to appear.

In \cite{WEM19}, matrix concentration inequalities were used to obtain the 
correct order of magnitude of $\lambda$ up to logarithmic factors. In 
contrast, Theorem~\ref{thm:intr} makes it possible to locate the 
\emph{exact} threshold at which the outlier appears in a nontrivial range 
of the design parameter $\ell$, cf.\ \cite[Theorem 3.7]{BCSV23}.

\begin{theorem}[Bandeira--Cipolloni--Schr\"oder--van Handel]
Let $p/2\le\ell<3p/4$ and $d = {\ell\choose p/2}{n-\ell\choose p/2}$.
Then
$$
	\lambda_{\rm max}(d^{-1/2}X) =
	\begin{cases}
	2+o(1) &\text{if }\lambda \le d^{-1/2},\\
	\lambda d^{1/2} + \frac{1}{\lambda d^{1/2}}+o(1)
	&\text{if }\lambda > d^{-1/2}
	\end{cases}
$$
with probability $1-o(1)$ as $n\to\infty$.
\end{theorem}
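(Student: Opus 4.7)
The plan is to decompose the Kikuchi matrix into signal and noise as
$X=\lambda M + W$ with
$M_{S,T}=x_{S\triangle T}\mathbf{1}_{|S\triangle T|=p}$ and
$W_{S,T}=Z_{S\triangle T}\mathbf{1}_{|S\triangle T|=p}$.
Since $x_i^2=1$, one has $x_{S\triangle T}=x_Sx_T$, so
$M=D_xBD_x$ where $D_x=\diag((x_S)_S)$ is orthogonal
and $B$ is the $\{0,1\}$ adjacency matrix of the Kikuchi graph on
$\binom{[n]}{\ell}$. In particular, $\mathrm{sp}(M)=\mathrm{sp}(B)$ and
$\lambda_{\max}(M)=d$, with normalized top eigenvector
$v_x=D_x\mathbf{1}/\sqrt{D}$ for $D=\binom{n}{\ell}$. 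Grouping the independent
gaussian entries, the noise matrix takes the form
$W=\sum_{R:|R|=p} Z_R A_R$, where $(A_R)_{S,T}=\mathbf{1}_{S\triangle T=R}$
is a self-adjoint partial permutation. This places $W$ squarely in the
framework of Theorem~\ref{thm:intr}.

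The first main step is to establish $\|W\|=2\sqrt{d}(1+o(1))$ with high
probability via Theorem~\ref{thm:intr}. For the free edge, the key identity
$\sum_R A_R^2=dI$ (for each $S$ the unique admissible partner is
$T=S\triangle R$, and $|\{R:|S\cap R|=p/2\}|=d$), combined with the scalar
ansatz $M=\alpha I$ in Lehner's variational formula~\eqref{eq:lehner},
yields $\|W_{\rm free}\|\le\inf_{\alpha>0}\{\alpha^{-1}+\alpha d\}=2\sqrt{d}$;
the matching lower bound follows from a low-order moment calculation for
$W_{\rm free}$. For the covariance parameter, a direct estimate using the
sparse block structure of the $A_R$ gives
$v(W)^2\lesssim \binom{n-p}{\ell-p/2}$, which under the hypothesis
$\ell<3p/4$ is polynomially smaller than $\sqrt{d}/(\log D)^{3/2}$.
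Theorem~\ref{thm:intr} then yields
$\mathrm{d_H}(\mathrm{sp}(W),\mathrm{sp}(W_{\rm free}))=o(\sqrt{d})$ with
probability $1-o(1)$.

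To handle the signal, observe that the eigenvalues of $B$ are given by
Eberlein polynomials of the associated Johnson-type scheme; the top
eigenvalue $\lambda_{\max}(B)=d$ is simple, and all other eigenvalues
$\mu_j$ satisfy $|\mu_j|\le(1-\varepsilon)d$ for an explicit
$\varepsilon>0$ depending on $\ell,p$. Writing $M=dP_x+M'$ with
$P_x=v_xv_x^{T}$ and $\|M'\|\le(1-\varepsilon)d$, the contributions of
$\lambda M'$ produce at most an outlier of size
$\lambda(1-\varepsilon)d+O(\sqrt{d})$ in the spectrum of $X$, which lies
strictly below the putative main outlier $\lambda d+\sqrt{d}/\lambda$
produced by $\lambda d P_x$. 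Thus only the rank-one piece affects the top
eigenvalue.

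It then remains to solve the secular equation
$\lambda d\,v_x^{T}(zI-W)^{-1}v_x=1$. By an anisotropic refinement of the
intrinsic freeness principle (in the spirit of~\cite{BCSV23}), the quadratic
form $v_x^{T}(zI-W)^{-1}v_x$ is approximated, for $z$ bounded away from
$[-2\sqrt{d},2\sqrt{d}]$, by $d^{-1/2}m_{\rm sc}(z/\sqrt{d})$ with
$m_{\rm sc}(\zeta)=(-\zeta+\sqrt{\zeta^{2}-4})/2$; delocalization of
$v_x$, whose entries have uniform magnitude $1/\sqrt{D}$, is crucial here.
Substituting and solving for $z$ yields
$\lambda_{\max}(d^{-1/2}X)=\lambda\sqrt{d}+(\lambda\sqrt{d})^{-1}+o(1)$
whenever $\lambda\sqrt{d}>1$, and $\lambda_{\max}(d^{-1/2}X)=2+o(1)$
otherwise. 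The main obstacle of the plan is precisely this last step:
Theorem~\ref{thm:intr} controls only the Hausdorff distance of spectra,
whereas the BBP formula requires quadratic-form approximations of the
resolvent along the specific direction $v_x$. Supplying this anisotropic
complement in the Kikuchi setting is where the real work of the proof lies.
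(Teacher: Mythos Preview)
The survey does not spell out a detailed proof, but it states the intended route explicitly: apply Theorem~\ref{thm:intr} to the \emph{full} matrix $X$, with the deterministic signal $\lambda M$ playing the role of $A_0$ in the model $X=A_0+\sum_i A_ig_i$. This gives $\mathrm{d_H}(\mathrm{sp}(X),\mathrm{sp}(X_{\rm free}))=o(\sqrt{d})$ for $X_{\rm free}=\lambda M\otimes\mathbf{1}+\sum_R A_R\otimes s_R$, and $\lambda_{\max}(X_{\rm free})$ is then computed from Lehner's formula~\eqref{eq:lehner} with $A_0=\lambda M$. The phase transition in $\lambda$ emerges from the purely deterministic analysis of that variational principle. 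The survey emphasizes exactly this point: the random-matrix input is ``completely subsumed by Theorem~\ref{thm:intr}'', and what is left is a deterministic optimization problem.

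Your route is genuinely different and, as you yourself flag at the end, contains a real gap. By applying Theorem~\ref{thm:intr} only to the centered noise $W$ and then attempting a rank-one BBP analysis, you are forced to control the quadratic form $v_x^{T}(zI-W)^{-1}v_x$; this is an anisotropic statement that Hausdorff control of the spectrum does not deliver, and supplying it is not a minor add-on but essentially an independent theorem. There is a second problem in your handling of the non-leading part $M'$ of the signal: the secular equation for $X=\lambda d\,P_x+(\lambda M'+W)$ involves the resolvent of $\lambda M'+W$, not of $W$ alone, and since $\|\lambda M'\|$ is of order $\lambda d$ this term cannot simply be dropped; the heuristic that $\lambda M'$ only contributes a lower outlier does not justify replacing $(zI-\lambda M'-W)^{-1}$ by $(zI-W)^{-1}$ in the quadratic form. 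The paper's approach bypasses both difficulties at once: by absorbing the entire signal $\lambda M$ (not just its rank-one piece) into $A_0$, Lehner's formula handles the interaction of signal and noise directly, and no anisotropic resolvent information is ever required. Your computation of $\|W_{\rm free}\|=2\sqrt{d}$ via the scalar ansatz and your estimate of $v(W)$ are correct and are exactly the ingredients that make Theorem~\ref{thm:intr} applicable; what differs is where the signal is placed.
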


This example illustrates a typical situation where intrinsic freeness is 
useful. The random matrix $X$ has a complicated structure with a 
combinatorial entry pattern and many dependent entries. Nonetheless,
Theorem~\ref{thm:intr} readily reduces the problem of understanding
$\lambda_{\rm max}(X)$ to an instance of Lehner's formula
\eqref{eq:lehner}. The deterministic problem of analyzing the resulting
variational principle proves to be relatively straightforward. While the
latter still requires some work, the random matrix aspect of the analysis
is completely subsumed by Theorem~\ref{thm:intr}.

\section{The polynomial method.}
\label{sec:poly}

A new approach to strong convergence, the polynomial method, was recently 
introduced in the work of Chen, Garza-Vargas, Tropp, and the author 
\cite{CGTV25} and further developed in \cite{CGV25,MdlS24,MPV25}. In 
contrast to previous approaches, this method is largely based on soft 
arguments that require limited problem-specific input. This has led 
to a series of new developments and applications that appear to be 
difficult to approach by other methods. Surprisingly, the 
polynomial method also yields the strongest known quantitative results in 
several problems that were previously approached by other means.

The polynomial method is not specific to strong convergence 
problems, but is rather a general method for capturing cancellations in 
spectral problems that possess some regular structure. In this section, we 
aim to sketch in a general setting how this method works and in what 
situations it is applicable.

In the following, we fix a sequence of self-adjoint random 
matrices $Z^N$ of dimension $N$ and a limiting operator $Z^\infty$
in a $C^*$-algebra $\mathcal{A}$. For example, to apply the method
to strong convergence as in Definition \ref{defn:strong} we will 
choose $Z^N=P(\boldsymbol{X}^N)$ and $Z^\infty=P(\boldsymbol{x})$.
In the present setting,
we consider the following two properties.
\begin{itemize}
\item Strong convergence in the sense that
$\|Z^N\| \xrightarrow{N\to\infty} \|Z^\infty\|$ in probability.
\item Weak convergence in the sense that
$\mathbf{E}[\ntr h(Z^N)] \xrightarrow{N\to\infty}
\tau(h(Z^\infty))$
for every polynomial $h\in\mathbb{R}[z]$.
\end{itemize}
Here $\ntr M = \frac{1}{n}\tr M$ is the normalized trace of an 
$N\times N$ matrix $M$, and $\tau$ is a faithful normalized trace on 
$\mathcal{A}$ (equivalently, we could write
$\tau(h(Z^\infty))=\int h\,d\nu_0$ where
$\nu_0$ is spectral distribution of $Z^\infty$).

The basic obstacle that is shared by essentially all methods for proving 
strong convergence is that the norm $\|Z^N\|$, a complicated function of 
the entries of $Z^N$, is typically not directly amenable to computations. 
In contrast, when $h\in\mathbb{R}[z]$ is a polynomial, the spectral 
statistics $\mathbf{E}[\ntr h(Z^N)]$ are expected polynomials of the 
entries of $Z^N$ which often admit (albeit complicated) explicit formulas 
that provide a basis for their analysis. For this reason, establishing 
weak convergence is generally much easier than establishing strong 
convergence.

\subsection{Weak and strong asymptotic expansions.}

A phenomenon that is observed in many random matrix models is that
the spectral statistics $\mathbf{E}[\ntr h(Z^N)]$ for polynomial $h$
behave in a regular way as a function of $N$:
\begin{equation}
\label{eq:weakeq}
	\mathbf{E}[\ntr h(Z^N)] =
	\nu_0(h) + \frac{\nu_1(h)}{N} +
	\frac{\nu_2(h)}{N^2} +
	\cdots +
	\frac{\nu_{q}(h)}{N^{q}} +
	O\bigg(\frac{1}{N^{q+1}}\bigg),
\end{equation}
where $\nu_k:\mathbb{R}[z]\to\mathbb{R}$ are linear functionals
on the space of polynomials. This \emph{weak asymptotic 
expansion} may be viewed as an extension of the notion of weak 
convergence to higher order, since by construction 
$\nu_0(h)=\tau(h(Z^\infty))$. 
In several situations, the polynomial spectral statistics are even
rational functions of $\frac{1}{N}$ \cite{CMN22,CMOS19}, while
in more complicated models the existence of such an
expansion is a nontrivial fact \cite{MP23,AM25,HMT25b}.

We now formulate a basic question that has been considered, e.g., in
\cite{HT05,Sch05,HT12,Par23,Par23b}:
\begin{quote}
\emph{
Does \eqref{eq:weakeq} remain valid for \emph{smooth}
test functions $h\in C^\infty(\mathbb{R})$?
}
\end{quote}
In this case, the linear functionals $\nu_k$ must extend to Schwartz 
distributions $\nu_k:C^\infty(\mathbb{R})\to\mathbb{R}$.
Random matrix models for which this holds will be said to admit 
a \emph{strong asymptotic expansion}. The significance of this question
is that a strong asymptotic expansion provides a
simple criterion for strong convergence, which forms the basis for
the work of Haagerup--Thorbj{\o}rnsen \cite{HT05} and Schultz 
\cite{Sch05}. We only state the upper bound, since the lower bound is
typically an easy consequence of weak convergence (see, e.g., 
\cite[Appendix A]{CGTV25}).

\begin{lemma}
\label{lem:htlem}
If $Z^N$ admits a strong asymptotic expansion and
$\supp\nu_1\subseteq [-\|Z^\infty\|,\|Z^\infty\|]$, then 
$$
	\|Z^N\|\le (1+o(1))\|Z^\infty\|
	\quad\text{in probability}.
$$
\end{lemma}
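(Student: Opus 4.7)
The strategy is the classical smooth bump-function argument introduced by Haagerup and Thorbj{\o}rnsen. The idea is to design a test function $\varphi$ that counts eigenvalues of $Z^N$ lying a fixed distance beyond $\mathrm{sp}(Z^\infty)$, and to exploit the assumed vanishing of the two leading terms in the strong asymptotic expansion to absorb the factor $N$ that separates $\ntr$ from $\tr$.

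Fix $\delta>0$ and $R>(1+\delta)\|Z^\infty\|$, and choose a nonnegative $\varphi\in C_c^\infty(\mathbb{R})$ that vanishes on $[-(1+\delta/2)\|Z^\infty\|,(1+\delta/2)\|Z^\infty\|]$, is supported in $[-R-1,R+1]$, and satisfies $\varphi(x)\ge 1$ whenever $(1+\delta)\|Z^\infty\|\le|x|\le R$. Applying the strong asymptotic expansion \eqref{eq:weakeq} to $\varphi$ with $q=1$ gives
$$
\mathbf{E}[\ntr\varphi(Z^N)]=\nu_0(\varphi)+\frac{\nu_1(\varphi)}{N}+O\bigg(\frac{1}{N^2}\bigg).
$$
By the spectral theorem, $\nu_0(\varphi)=\tau(\varphi(Z^\infty))=0$ since $\mathrm{sp}(Z^\infty)\subseteq[-\|Z^\infty\|,\|Z^\infty\|]$ and $\varphi$ vanishes on this interval; the hypothesis $\supp\nu_1\subseteq[-\|Z^\infty\|,\|Z^\infty\|]$ likewise forces $\nu_1(\varphi)=0$. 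Multiplying by $N$ yields $\mathbf{E}[\tr\varphi(Z^N)]=O(1/N)$, and since $\tr\varphi(Z^N)\ge 1$ whenever some eigenvalue of $Z^N$ lies in $\{x:(1+\delta)\|Z^\infty\|\le|x|\le R\}$, Markov's inequality gives $\mathbf{P}[(1+\delta)\|Z^\infty\|\le\|Z^N\|\le R]=O(1/N)$.

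It remains to rule out eigenvalues past $R$. For this I would apply the weak asymptotic expansion to the polynomial $h(x)=x^{2p}$ for a sufficiently large fixed $p$: the leading term gives $\mathbf{E}[\tr(Z^N)^{2p}]=N\|Z^\infty\|^{2p}(1+o(1))$, and since $\|Z^N\|^{2p}\le\tr(Z^N)^{2p}$, Markov yields
$$
\mathbf{P}[\|Z^N\|>R]\le \frac{N\|Z^\infty\|^{2p}}{R^{2p}}(1+o(1)),
$$
which vanishes once $R/\|Z^\infty\|$ is fixed larger than $1$ and $p$ is taken large. Combining the two estimates and letting $\delta\downarrow 0$ completes the proof.

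The conceptual heart of the argument—and the reason the hypothesis is imposed on $\nu_1$ rather than on higher $\nu_k$—is the precise matching of orders: the $O(1/N^2)$ error in the expansion can absorb the factor of $N$ in $\mathbf{E}[\tr\varphi(Z^N)]=N\,\mathbf{E}[\ntr\varphi(Z^N)]$ only because \emph{both} $\nu_0(\varphi)$ (automatic from the location of $\mathrm{sp}(Z^\infty)$) and $\nu_1(\varphi)$ (by hypothesis) vanish on test functions supported away from $[-\|Z^\infty\|,\|Z^\infty\|]$. The only genuine technical nuisance is the tail control beyond $R$, handled above by the crude polynomial-moment estimate; all remaining steps are soft consequences of the expansion.
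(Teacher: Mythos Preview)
Your core bump-function argument is correct and is exactly the paper's approach: choose a smooth nonnegative test function vanishing on $[-\|Z^\infty\|,\|Z^\infty\|]$, observe that $\nu_0$ and $\nu_1$ both annihilate it, and absorb the factor $N$ into the $O(1/N^2)$ error. The problem is your tail step. Your bound
\[
\mathbf{P}[\|Z^N\|>R]\le \frac{N\|Z^\infty\|^{2p}}{R^{2p}}(1+o(1))
\]
does \emph{not} vanish as $N\to\infty$ for any fixed $p$: the factor $N$ diverges, and $(\|Z^\infty\|/R)^{2p}$ is just a fixed constant. Letting $p$ grow with $N$ is not justified either, since the $o(1)$ in the weak expansion depends on $p$. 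So the moment argument, as written, does not close.

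The paper sidesteps this entirely by exploiting the precise hypothesis: the strong asymptotic expansion is assumed to hold for all $h\in C^\infty(\mathbb{R})$, not only $h\in C_c^\infty(\mathbb{R})$ (equivalently, the $\nu_k$ are compactly supported distributions). One may therefore take $h\ge 0$ with $h\equiv 0$ on $[-\|Z^\infty\|,\|Z^\infty\|]$ and $h\equiv 1$ on $\{|z|\ge \|Z^\infty\|+\varepsilon\}$, with no truncation at $R$. Then
\[
\mathbf{P}\big[\|Z^N\|\ge \|Z^\infty\|+\varepsilon\big]\le \mathbf{E}[\tr h(Z^N)] = N\big(\nu_0(h)+\tfrac{1}{N}\nu_1(h)+O(\tfrac{1}{N^2})\big)=O(\tfrac{1}{N})
\]
directly, and no separate tail control is needed. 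Your detour through compact support created an artificial difficulty that your patch does not actually fix.
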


\begin{proof}
Choose $h\in C^\infty(\mathbb{R})$, 
$h\ge 0$ with
$h(z)=0$ for $|z|\le \|Z^\infty\|$ and $h(z)=1$ for $|z|\ge
\|Z^\infty\|+\varepsilon$. Then
$$
	\mathbf{P}[\|Z^N\|\ge \|Z^\infty\|+\varepsilon]
	\le
	\mathbf{E}[
	\#\{\text{eigenvalues }\lambda\text{ of }Z^N
	\text{ with }|\lambda|\ge \|Z^\infty\|+\varepsilon
	\}]
	\le \mathbf{E}[{\tr h(Z^N)}],
$$
where we note the unnormalized trace $\tr = N\ntr$ on 
the right-hand side.
But as $h$ vanishes on $[-\|Z^\infty\|,\|Z^\infty\|]$, we have
$\nu_0(h)=\tau(h(Z^\infty))=0$ and $\nu_1(h)=0$. 
Thus \eqref{eq:weakeq}
yields $\mathbf{E}[{\tr h(Z^N)}] = O(\frac{1}{N})$.
\end{proof}

The key difficulty in applying this criterion is that it is far from clear 
why random matrix models that admit a weak asymptotic expansion should 
also admit a strong asymptotic expansion. A strong asymptotic expansion 
implies, for example, that weak convergence takes place at the same rate 
$\frac{1}{N}$ for polynomial and smooth test functions; it is not at all 
obvious why this should always be the case. Consequently, applications 
of this approach were restricted to situations where smooth spectral 
statistics could be analyzed directly using analytic techniques (such as 
integration by parts), leaving more complicated models out of reach.

In essence, the punchline of the polynomial method is that under mild 
assumptions, the existence of a \emph{weak} asymptotic expansion 
automatically implies the existence of a \emph{strong} asymptotic 
expansion. This opens the door to establishing strong convergence in a 
many situations where weak asymptotic expansions are accessible but strong 
asymptotic expansions had previously remained out of reach.

\subsection{From weak to strong.}

We now aim to sketch how the polynomial method works.
To simplify the presentation, let us assume a uniform a priori bound 
$\|Z^N\|\le K$ on the random matrices, and that 
$$
	\mathbf{E}[\ntr h(Z^N)]=\Phi_h(\tfrac{1}{N})
$$
is given by a polynomial $\Phi_h$ of degree $q$ for every 
polynomial test function $h\in\mathbb{R}[z]$ of degree $q$ (so that
there is no error term in \eqref{eq:weakeq}). These two assumptions do
not actually hold simultaneously for any random matrix model, so that 
additional arguments are needed to truncate the expansion or 
the support of the random matrices. However, we will ignore these issues
in order to focus on the core ideas behind the method.

For sake of illustration, we explain how to show that 
$\nu_1:\mathbb{R}[z]\to\mathbb{R}$ extends continuously to smooth 
functions. Completely analogous arguments apply to the higher-order terms 
$\nu_k$ and to the error term in the expansion.

\subsubsection*{Step 1.}

The basic observation is that we can view $\nu_1(h)=\Phi_h'(0)$ as
the derivative of the expansion at zero. The secret weapon of the
method is the following classical theorem of A.\ Markov \cite[p.\ 
91]{Che98}.

\begin{theorem}[Markov]
\label{thm:markov}
For any $f\in\mathbb{R}[z]$ of degree $q$, we have
$\|f'\|_{L^\infty[0,a]} \le \frac{2q^2}{a}\|f\|_{L^\infty[0,a]}$.
\end{theorem}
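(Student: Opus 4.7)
The plan is to reduce the stated bound to the classical Markov inequality on $[-1,1]$ by an affine change of variable, and then to invoke the extremal property of Chebyshev polynomials. First, I would set $g(x) = f(\tfrac{a}{2}(x+1))$ for $x \in [-1,1]$. Then $g$ is a polynomial of degree $q$ with $\|g\|_{L^\infty[-1,1]} = \|f\|_{L^\infty[0,a]}$, and the chain rule gives $\|g'\|_{L^\infty[-1,1]} = \tfrac{a}{2}\|f'\|_{L^\infty[0,a]}$. This reduces the statement to the canonical form $\|g'\|_{L^\infty[-1,1]} \le q^2 \|g\|_{L^\infty[-1,1]}$, with the extra factor $\tfrac{2}{a}$ absorbing the dilation.

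After normalizing $\|g\|_\infty \le 1$, the target is to show $|g'(y)| \le q^2$ for every $y \in [-1,1]$. The Chebyshev polynomial $T_q(x) = \cos(q\arccos x)$ is the extremizer: it has sup-norm one and $|T_q'(\pm 1)| = q^2$, so the bound is sharp. For interior points with $1 - y^2 \ge q^{-2}$, the bound follows immediately from Bernstein's inequality $|g'(y)| \le q/\sqrt{1-y^2}$, which itself is proved by writing $g(\cos\theta)$ as a real trigonometric polynomial of degree $q$ and applying the elementary sup-norm bound $\|h'\|_\infty \le q\|h\|_\infty$ valid for trigonometric polynomials of degree $q$.

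The main obstacle, as expected, is the thin region near the endpoints where Bernstein degenerates and where the extremizer $T_q$ actually saturates the inequality. Here I would use the Chebyshev interpolation identity: since $\deg g \le q$, the polynomial $g$ is determined by its values at the $q+1$ extremal Chebyshev nodes $x_k = \cos(k\pi/q)$, and differentiating the corresponding Lagrange interpolation formula at $y = \pm 1$, then bounding each term using $|g(x_k)| \le 1$ together with the explicit values of $T_q'(x_k)$ and $\prod_{j\ne k}(x_k - x_j)$, produces the bound $|g'(\pm 1)| \le q^2$ after some algebra. A slicker alternative is the Markov brothers' comparison argument: if $|g'(1)| > q^2$, then $g(x) - \frac{g'(1)}{q^2}T_q(x)$ is a polynomial of degree $\le q$ whose derivative vanishes at $x=1$ while the polynomial itself is forced to violate the equioscillation pattern of $T_q$, yielding a contradiction. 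Either route seals the endpoint case, and a brief comparison argument against the interior bound completes the proof in the thin transition region.
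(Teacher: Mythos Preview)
The paper does not prove this theorem; it quotes it as the classical Markov inequality with a citation to Cheney's approximation-theory text and then uses it as a black box in the polynomial method. So there is no paper proof to compare against.

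Your sketch follows a standard route (affine rescaling to $[-1,1]$, Bernstein for the bulk, Chebyshev comparison near the edge), and the reduction and the Bernstein step are correct. The place where the argument is not yet complete is the ``thin transition region'' $\sqrt{1-q^{-2}}<|y|<1$: your endpoint comparison with $T_q$ establishes $|g'(\pm 1)|\le q^2$, but neither this nor your Bernstein bound covers the strip in between, and the phrase ``a brief comparison argument against the interior bound'' is carrying real weight that you have not supplied. The cleanest way to close this gap is Schur's inequality: if $\deg p\le q-1$ and $\sqrt{1-x^2}\,|p(x)|\le 1$ on $[-1,1]$, then $\|p\|_{L^\infty[-1,1]}\le q$. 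Applying this to $p=g'/q$ (which is legitimate precisely because of Bernstein) yields $|g'|\le q^2$ uniformly in one stroke and makes the interior/endpoint split unnecessary. Alternatively one can run the equioscillation zero-counting argument at the point where $|g'|$ attains its maximum rather than only at $\pm 1$, but that version needs more care than your sketch indicates (and your counting at $x=1$ should also explicitly handle the degenerate case where $g-cT_q$ vanishes at a Chebyshev node).
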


In the present setting, as $\|Z^N\|\le K$, we have a 
trivial a priori bound $|\Phi_h(\frac{1}{N})| = |\mathbf{E}[\ntr h(Z^N)]| 
\le \|h\|_{L^\infty[-K,K]}$. This can be exploited by applying
Theorem \ref{thm:markov} to $\Phi_h$ twice. First, the Markov inequality 
ensures that $\Phi_h$ cannot change rapidly between the discrete points 
$\frac{1}{N}$, so that it remains bounded on a continuous interval.
Second, applying the Markov inequality again yields a bound on 
$\nu_1(h)=\Phi_h'(0)$. Combining these arguments yields 
\begin{equation}
\label{eq:magic}
	|\nu_1(h)| \le Cq^4 \|h\|_{L^\infty[-K,K]}
\end{equation}
for any polynomial test function $h\in \mathbb{R}[z]$ of degree $q$,
where $C$ is a universal constant.

\subsubsection*{Step 2.}

To expoit this estimate, let $T_k$ be the Chebyshev polynomial defined by 
$T_k(\cos\theta)= \cos(k\theta)$, and express $h\in\mathbb{R}[z]$ 
as $h(x)=\sum_{k=0}^q a_k T_k(x/K)$. By applying \eqref{eq:magic} to each 
term separately, we can estimate
\begin{equation}
\label{eq:magic2}
	|\nu_1(h)| \le
	\sum_{k=0}^q Ck^4|a_k|
	\lesssim
	\|h\|_{C^5[-K,K]},
\end{equation}
where we used that $\|T_k\|_{L^\infty[-1,1]}=1$. Here the last inequality
is a simple fact of Fourier analysis, since $a_k$ are the
Fourier coefficients of the function 
$f(\theta)=h(K\cos\theta)$. We have now accomplished precisely what we 
wish to show, since \eqref{eq:magic2} ensures that $\nu_1$ extends 
continuously to every $h\in C^5(\mathbb{R})$.

We emphasize that the miracle of \eqref{eq:magic} is that it is the 
uniform norm $\|h\|_{L^\infty[-K,K]}$ of $h(z)=\sum_{k=0}^q b_k z^k$ that 
appears on the right-hand side, as opposed to the norm of the coefficients 
$\sum_{k=0}^q |b_k|$ which is elementary. The latter is typically 
exponentially larger in $q$ than the former, which would make it 
impossible to extend $\nu_1$ beyond analytic functions. The remarkable 
feature of the Markov inequality is that it is able to capture 
cancellations between the coefficients of $h$, which is the key to the 
success of the method.

\subsubsection*{Further ingredients.}

For expository purposes, we assumed above that $\Phi_h$ is itself a 
polynomial. This is not the case in most applications, which requires some 
adaptations. The polynomial method was initially developed \cite{CGTV25} 
in a setting where $\Phi_h$ is a rational function, to which the above 
arguments are readily adapted; see \cite[\S 3]{vH25cdm} for a 
self-contained exposition. It was later realized in \cite{MPV25} that the 
method can be adapted to work assuming only that the weak asymptotic 
expansion \eqref{eq:weakeq} holds with a modest estimate on the error term 
(viz., of Gevrey type $(q!)^C N^{-q}$ for any $C>0$), which greatly 
expands its range of applications. The method can also be adapted to 
situations where $\|Z^N\|$ is not uniformly bounded, such as gaussian 
models \cite{CGV25}.

We have not mentioned so far the second ingredient needed by Lemma 
\ref{lem:htlem}, which is a bound on the support of $\nu_1$. Such a bound
can be achieved using the fact \cite[Lemma 4.9]{CGTV25},
which holds for any compactly supported 
Schwartz distribution $\mu$, that $\supp\mu\subseteq[-\varrho,\varrho]$
with
$\varrho = \limsup_{p\to\infty} |\mu(z^p)|^{1/p}$.
Thus the problem reduces to understanding
the exponential growth rate of the moments of $\nu_1$, which is accessible 
since these moments can be computed explicitly in practice. Several other 
tools, such as positivization \cite{MdlS24}, bootstrapping \cite{CGV25}, 
and supersymmetry arguments \cite{CGV25}, have been developed to 
facilitate this part of the analysis.

\subsection{Open questions.}

While the polynomial method has already led to a series of
applications \cite{CGTV25,CGV25,MdlS24,Cas24,MPV25,HMT25,HMT25b}, its full 
potential remains unclear. We highlight two general questions in this 
direction.

First, the polynomial method relies on the weak asymptotic expansion 
\eqref{eq:weakeq}. While the existence of such an expansion is an easy 
fact for the most basic random matrix models, it remains an open question 
whether or not such an expansion holds in many interesting cases.
For example, it is unclear which discrete groups admit random permutation 
representations that have a weak asymptotic expansion.

Second, in various situations where strong convergence is of considerable 
interest (e.g., random Schreier graphs of finite simple groups of Lie 
type), a weak asymptotic expansion in the sense of \eqref{eq:weakeq} does 
not appear to hold. Could an approach in the spirit of the polynomial 
method nonetheless address such questions by exploiting other forms of 
regular behavior of the weak spectral statistics? At present, this 
question is entirely speculative.

\bibliographystyle{siamplain}
\bibliography{ref}

\end{document}